\documentclass[12pt]{amsart}



\usepackage{amssymb}

\usepackage{enumitem}

\usepackage{graphicx}

\makeatletter
\@namedef{subjclassname@2020}{%
  \textup{2020} Mathematics Subject Classification}
\makeatother

\usepackage[T1]{fontenc}


\newtheorem{theorem}{Theorem}[section]

\newtheorem{lemma}[theorem]{Lemma}



\theoremstyle{definition}

\newtheorem{remark}[theorem]{Remark}



\numberwithin{equation}{section}


\frenchspacing

\textwidth=13.5cm
\textheight=23cm
\parindent=16pt
\oddsidemargin=-0.5cm
\evensidemargin=-0.5cm
\topmargin=-0.5cm
\linespread{1.2}




\theoremstyle{plain}
\newtheorem{thm}{Theorem}[section]
\newtheorem*{starthma}{Theorem A}

\newtheorem*{starthm}{Theorem}

\newtheorem*{starthm1}{Main Theorem 1}
\newtheorem*{starthm2}{Main Theorem 2}
\newtheorem*{starthm3}{Main Theorem 3}

\newtheorem{prop}[thm]{Proposition}
\newtheorem{defn}{Definition}
\input txdtools

\newcommand{\cali}{{\mathcal I}}

\newcommand{\calg}{{\mathcal G}}

\newcommand{\call}{{\mathcal L}}

\newcommand{\calp}{{\mathcal P}}

\newcommand{\calr}{{\mathcal R}}
\newcommand{\cals}{{\mathcal S}}

\newcommand{\calu}{{\mathcal U}}

\newcommand{\calz}{{\mathcal Z}}

\newcommand{\CC}{{\mathbb C}}

\newcommand{\NN}{{\mathbb N}}

\renewcommand{\hat}{\widehat}

\newcommand{\la}{\lambda}

\begin{document}

\title[Non-Ergodicity]{Meromorphic functions whose action on  their Julia sets is  Non-Ergodic }

\author{Tao Chen, Yunping Jiang and Linda Keen}

\address{Tao Chen, Department of Mathematics, Engineering and Computer Science,
Laguardia Community College, CUNY,
31-10 Thomson Ave. Long Island City, NY 11101 and
CUNY Graduate Center, New York, NY 10016
}
\email{tchen@lagcc.cuny.edu}

\address{Yunping Jiang, Department of Mathematics, Queens College of CUNY,
Flushing, NY 11367 and Department of Mathematics and the CUNY Graduate Center, New York, NY 10016}
\email{yunping.jiang@qc.cuny.edu}

\address{Linda Keen, Department of Mathematics, the CUNY Graduate
School, New York, NY 10016}
\email{LKeen@gc.cuny.edu; linda.keenbrezin@gmail.com}

\subjclass[2010]{Primary: 37F10, 30F05; Secondary: 30D05, 37A30}

\keywords{Asymptotic Values, Nevanlinna functions, Ergodic}

\begin{abstract}
 Nevanlinna functions are meromorphic functions with a finite number of asymptotic values and no critical values.  In \cite{KK2} it was proved that if the orbits of all the asymptotic values accumulate on a compact set on which the function acts as a repeller, then the function acts ergodically on its Julia set.  In \cite{CJK4}, we proved the action of the function on its Julia set is still ergodic  if some, but not all of the asymptotic values land on infinity, and the remaining ones land on a compact repeller.   In this paper, we complete the characterization of ergodicity for Nevanlinna functions by proving that if all the asymptotic values land on infinity,
   then the Julia set is the whole sphere and the action of the  map there  is non-ergodic.
\end{abstract}

\maketitle

\section{Introduction}

 It is a theorem of McMullen \cite{McM} and Lyubich \cite{Lyu1}) that if $f$ is a rational map of degree greater than one, and if $P(f)$ is its post-singular set (see the formal definition in the next section), one of two things holds: either the Julia set $J(f)$ is equal to the whole Riemann sphere and the action of $f$ is ergodic or,  for almost every $z$ in $J(f)$, the spherical distance $d(f^n(z),P(f)) \rightarrow 0$ as $n \to \infty$; that is, the $\omega$-limit set $\omega(z)$ is a subset of $P(f)$ that varies with $z$.

 In the same vein, Bock~\cite{Boc} proved  a  dichotomy theorem  for  transcendental functions:  either  the Julia set is the whole sphere and for any set  $A$ of positive Lebesgue measure, the orbits of almost all points in $\CC$ have infinitely many iterates that land in $A$, or  the Julia set is not the whole sphere, and almost every point in the Julia set is attracted to the post-singular set. Unlike the rational case, though, when the Julia set is the whole sphere, it is not known when such functions are ergodic.  Misiurewicz~\cite{Mis} has shown that when $f(z)=e^z$, the Julia set is the whole sphere, but Lyubich [Lyu2], following earlier work \cite{GGS,Dev}, has shown that its action is \emph{not} ergodic.   More examples of entire maps that \emph{are} ergodic can be found in \cite{WZ,CW}.  Skorulski, ~\cite{Skor,Skor1},  answered  the ergodicity question for  meromorphic maps of the form $f(z)=\la \tan z$ and $f(z)=\frac{a e^{z^p}+b e^{-z^p}}{c e^{z^p} + d e^{z^{-p}}}$ that have two asymptotic values.

 A more general sort of affirmative result can be found in  \cite{KK2}:  If the orbits of the singular values either land on or accumulate on a compact repelling set, and if the  map $f$ satisfies an additional growth rate condition,  then its Julia set is the whole sphere and it is ergodic.

 In this paper, we study the ergodicity question for meromorphic functions whose singular set consists of  finitely many asymptotic values and no critical values, the so-called {\em Nevanlinna functions}.   The dynamical properties of these functions were first investigated in \cite{DK}, where it was shown that they share many of the  dynamical properties of rational maps. In~\cite{CJK4}, the authors studied the ergodicity question for subfamilies of Nevanlinna functions for which some of the  asymptotic values have orbits that land on infinity.  They proved:

\begin{starthma}
Let $f$ be a Nevanlinna function.   If some, but not all, of the asymptotic values of $f$ have orbits that land on infinity, and if the remaining asymptotic values have  $\omega$-limit sets that  are compact repellers, then the Julia set of $f$  is the Riemann sphere, and $f$ is  ergodic there.
\end{starthma}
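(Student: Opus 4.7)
The plan is to separate the two conclusions of the theorem. For the statement that $J(f)=\chat$, I would argue that since a Nevanlinna function has no critical values, the classical classification of Fatou components leaves only attracting/parabolic basins, Siegel disks, and Baker domains as candidates for a periodic Fatou component, and Baker--Rippon--Stallard type arguments using the finiteness of the singular set rule out wandering domains. In each remaining case an asymptotic value must accumulate inside the associated component or on its boundary; but by hypothesis every asymptotic value either has forward orbit landing on the essential singularity $\infty$ or has $\omega$-limit set contained in a compact repeller, and neither behaviour is compatible with capture by a Fatou component. Hence $J(f)=\chat$.

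For ergodicity I would invoke Bock's dichotomy \cite{Bock} and aim for a contradiction from the non-ergodic alternative, in which $\omega(z)\subset P(f)$ for Lebesgue-a.e.\ $z$. The post-singular set splits naturally as $P(f)=P_\infty\cup P_c$, where $P_\infty$ is the union of the finite forward orbits of the asymptotic values that land on $\infty$ together with $\infty$ itself, and $P_c$ is the compact repelling forward invariant set carrying the remaining asymptotic values. By forward invariance it suffices to rule out each of $\omega(z)\subset P_c$, $\omega(z)\subset P_\infty$, and $\omega(z)$ meeting both, on a set of positive Lebesgue measure.

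The compact-repeller piece I would handle exactly as in \cite{KK2}: around $P_c$ one builds nested neighborhoods $U\Subset V$ on which univalent pullbacks of $f$ satisfy a uniform Koebe distortion bound, and a pullback argument then shows that for a.e.\ $z$ with $\omega(z)\subset P_c$ a definite proportion of the Koebe disks at $z$ land outside $P(f)$, a contradiction. The new piece is $P_\infty$: in an asymptotic tract $T$ attached to $\infty$ I would pass to logarithmic coordinates, where $f$ becomes, to leading order, an exponential/translation map with very strong expansion. Borel--Cantelli estimates in the log chart then show that the set of points whose forward orbit returns to a neighborhood of $\infty$ through the tract $T$ infinitely often has measure zero, and combining this with the compact-repeller estimate across all mixed behaviors rules out the non-ergodic alternative.

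The hard part is the mixed regime, in which orbits alternate between excursions into asymptotic tracts near $\infty$ and long stretches shadowing $P_c$. The Koebe distortion bound adapted to the compact-repeller neighborhood is unavailable inside a tract, and the logarithmic coordinates that tame a tract do not patch cleanly onto the coordinates adapted to $P_c$. Carrying out a two-scale pullback scheme that inducts on the sequence of transitions between tract excursions and repeller recurrences while maintaining uniform lower bounds on the density of ``good'' returns at each scale is where the real work lies, and this is the obstacle I would expect to be the most delicate piece of the proof.
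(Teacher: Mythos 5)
Theorem A is not proved in this paper: it is quoted as a result established in \cite{KK2} and \cite{CJK4}, and the paper gives no argument for it beyond that citation. So there is no internal proof to compare your proposal against, and I can only assess it on its own terms.

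Your outline has the right general shape — reduce to Bock's dichotomy, split $P_f = P_\infty \cup P_c$, handle the compact-repeller piece via the Koebe-distortion pullback scheme of \cite{KK2}, and treat excursions near $\infty$ in logarithmic coordinates — but there is a genuine gap beyond the mixed regime you flag. Your treatment of the case $\omega(z)\subset P_\infty$ as a ``Borel--Cantelli estimate in the log chart'' is not an argument yet, and more importantly it never uses the hypothesis that \emph{some but not all} asymptotic values are prepoles. That hypothesis is indispensable: the present paper's Main Theorem 2 shows that when \emph{all} asymptotic values are prepoles, the set $\{z : \omega(z)=P_f=P_\infty\}$ has \emph{full} Lebesgue measure, so any self-contained Borel--Cantelli or escape estimate near $\infty$ that ignores the non-prepole tracts is doomed to fail. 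What must actually be exploited is that a neighborhood of $\infty$ decomposes into asymptotic tracts, at least one of which feeds the orbit toward the compact repeller; bounded distortion in the auxiliary $\calz_i$-coordinates then shows that on each return to a neighborhood of $\infty$ a definite proportion of the Lebesgue measure is sent into a non-prepole tract. That is the mechanism that kills both the $P_\infty$-only case and the mixed case simultaneously — every density point must eventually see a pullback of a positive-measure piece landing near the repeller, after which the \cite{KK2} argument takes over. Without isolating this mechanism, the proposed two-scale induction on transitions has nothing to make the induction close, and the proof does not go through. (As a smaller point, the Bock statement you quote gives conservativity, not ergodicity, as the first alternative; passing to the ergodicity-versus-convergence-to-$P_f$ dichotomy used implicitly in the paper requires the further density-point/invariant-set argument, which should be made explicit.)
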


In this paper we complete the characterization of ergodicity for Nevanlinna functions by proving

\begin{starthm1}
 Let $f$ be a Nevanlinna function with $N$ asymptotic values, $\lambda_1, \ldots, \lambda_N$. 
 If  all $N$  asymptotic values land on infinity,  then  the Julia set of $f$ is the Riemann sphere, and the action of $f$ is \emph{not} ergodic there.
 \end{starthm1}
 
 In addition, under the same hypotheses for the asymptotic values of $f$,  we prove the following theorems.

 \begin{starthm2}
 For almost every point $z\in \mathbb{C}$, its $\omega$-limit set satisfies:
 $$\omega(z)=P_f=\cup_{i=1}^N\{\lambda_i, f(\lambda_i),\ldots, f^{p_i-1}(\lambda_i), \infty\}.$$
 \end{starthm2}

  \begin{starthm3}
 There is no $f$-invariant finite measure   absolutely continuous with respect to Lebesgue measure.
\end{starthm3}

Theorem A , together with Main Theorems 1, 2, and 3,   give a full answer to  ergodicity questions for Nevanlinna functions whose Julia set is the whole sphere and whose asymptotic values 
 land either on infinity or on a compact repeller. The questions of whether the conclusions of all three theorems hold for Nevanlinna functions with asymptotic values   whose orbits are infinite and whose accumulation sets are unbounded are still open.   Another open question  is whether   there exists a unique  $\sigma$-finite $f$-invariant measure absolutely continuous with respect to Lebesgue measure if all the asymptotic values of $f$  have finite orbits.

\medskip
The paper is organized as follows.  The first part of Section~\ref{prelim} contains basic definitions and standard theorems.  The remainder of that section is devoted to a detailed review of the properties of  Nevanlinna functions.    In particular, the asymptotic values divide a neighborhood of infinity into asymptotic tracts on which the function is exponentially contracting.   In order to prove the function is non-ergodic, we need to produce disjoint invariant wandering sets in these tracts,  each of which has positive Lebesgue measure.  This involves estimating the contraction on the asymptotic tracts.  In order to make these estimates, we use some classical techniques.  We introduce an ``auxiliary'' variable in each tract which converts it to a half plane in which we can make the estimates. 
Section~\ref{wanderingsets} contains the heart of the proof of  Main Theorem 1.   Distinct invariant wandering sets are constructed and  their Lebesgue measure is shown to be positive which proves the function is not ergodic.  The final section contains the proofs of Main Theorems 2 and 3.

\section{Preliminaries}\label{prelim}																															
\subsection{Definitions and standard theorems}\label{sec2.1}
A meromorphic function, $f: \CC \rightarrow \hat{\CC}$   is a local homeomorphism everywhere except at the set $S_f$ of  singular values.   In this paper, we will focus on functions whose singular set  is finite so that the singular values are isolated. We will assume this throughout the paper.   For these functions, the singular values  fall into two categories:  \\
Let $v\in \widehat{\mathbb{C}}$ be a singular value and let $V$ be a neighborhood of $v$.  Then
\begin{itemize}
\item If, for some component $U$ of $f^{-1}(V)$,  there is a  $u\in U$ such that $f'(u)=0$, then $u$ is   a {\em critical point} and $v=f(u)\in V$ is the corresponding  {\em critical value}, or
\item	 If, for some component $U$ of $f^{-1}(V)$, $f:U \rightarrow V \setminus \{ v \}$ is a universal covering map then $v$ is a {\em logarithmic asymptotic value}.   The component $U$ is called an {\em asymptotic tract} for $v$.   Any path $\gamma(t) \in U$ such that $\lim_{t \to 1} \gamma(t) = \infty$, $\lim_{t \to 1} f(\gamma(t))=v$ is called an {\em asymptotic path} for $v$.
\end{itemize}

 Note that the definition of an asymptotic tract depends on the choice of the neighborhood $V$.  If $V_1, V_2$ are punctured neighborhoods of $v$  and  $U_1$ and $ U_2$ are  unbounded components of their preimages such that $U=U_1 \cap U_2 \neq \emptyset$,  we call them {\em equivalent asymptotic tracts}.  For readability, we will not distinguish  between ``an asymptotic tract'' and its equivalence class.

 In the proofs of our results  we will repeatedly use the Koebe distortion theorems.  Many proofs exist in the standard literature on conformal mappings.  (See e.g. \cite[Theorem 6.16]{Z}),  For the reader's convenience, we state the theorems as we use them without proof.

\begin{thm}[Koebe Distortion Theorem]
Let $f: D(z_0, r)\to \mathbb{C}$ be a univalent function, then for any $\eta<1$,
\begin{enumerate}
\item $\displaystyle |f'(z_0)|\frac{\eta r}{(1+\eta)^2}\leq |f(z)-f(z_0)|\leq |f'(z_0)|\frac{\eta r}{(1-\eta)^2}$, $z\in D(z_0, \eta r)$,
\item If $\displaystyle T(\eta)=\frac{(1+\eta)^4}{(1-\eta)^4}$, $\displaystyle \frac{|f'(z)|}{|f'(w)|}\leq T(\eta)$, for any $z, w\in D(z_0, \eta r)$.
\item  $\displaystyle |\arg\frac{f'(z)}{f'(z_0)}|\leq 2\ln |\frac{1+\eta}{1-\eta}|$, for any $z\in D(z_0, \eta r)$.
\end{enumerate}
\end{thm}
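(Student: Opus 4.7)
The plan is to reduce the statement to the classical Koebe growth and distortion estimates for the normalized class
$$\cals = \{\, g \colon \DD \to \CC \text{ univalent},\ g(0)=0,\ g'(0)=1\,\},$$
and then transfer the inequalities back to $f$ by an affine change of coordinates. Specifically, set
$$g(w) = \frac{f(z_0+rw) - f(z_0)}{r\,f'(z_0)}, \qquad w \in \DD.$$
Then $g \in \cals$, and for any such $g$ one has the standard growth estimate
$$\frac{|w|}{(1+|w|)^2} \leq |g(w)| \leq \frac{|w|}{(1-|w|)^2}$$
and the derivative estimate
$$\frac{1-|w|}{(1+|w|)^3} \leq |g'(w)| \leq \frac{1+|w|}{(1-|w|)^3},$$
both derived from Bieberbach's coefficient bound $|a_2| \leq 2$ after pre-composing $g$ with suitable disk automorphisms and reading off the first non-trivial coefficient.

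For item (1), I would restrict the growth estimate to $|w| \leq \eta$, multiply through by $r|f'(z_0)|$, and rewrite in terms of $z = z_0+rw$; this is exactly the stated double inequality. For item (2), given $z,w \in D(z_0,\eta r)$, set $\zeta = (z-z_0)/r$ and $\xi = (w-z_0)/r$, so $|\zeta|,|\xi| \leq \eta$. Since $f'(z) = f'(z_0)\,g'(\zeta)$,
$$\frac{|f'(z)|}{|f'(w)|} = \frac{|g'(\zeta)|}{|g'(\xi)|} \leq \frac{(1+\eta)/(1-\eta)^3}{(1-\eta)/(1+\eta)^3} = \frac{(1+\eta)^4}{(1-\eta)^4} = T(\eta),$$
using the upper bound in the numerator and the lower bound in the denominator.

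For item (3), I would integrate $g''/g'$ along a radial segment. The refined pre-Schwarzian inequality for $\cals$ reads
$$\left|\frac{\zeta\,g''(\zeta)}{g'(\zeta)} - \frac{2|\zeta|^2}{1-|\zeta|^2}\right| \leq \frac{4|\zeta|}{1-|\zeta|^2}, \qquad \zeta \in \DD.$$
Parametrizing the segment from $0$ to $\zeta_0 = \eta e^{i\theta}$ by $\zeta = t e^{i\theta}$, $0 \leq t \leq \eta$, the subtracted term is real, so the imaginary part of $e^{i\theta}g''(\zeta)/g'(\zeta)$ is bounded in absolute value by $4/(1-t^2)$. Integrating yields
$$\left|\arg\frac{g'(\zeta_0)}{g'(0)}\right| = \left|\mathrm{Im}\int_0^{\zeta_0} \frac{g''(\zeta)}{g'(\zeta)}\,d\zeta\right| \leq 4\int_0^{\eta} \frac{dt}{1-t^2} = 2\log\frac{1+\eta}{1-\eta},$$
and translating back via $\arg(f'(z)/f'(z_0)) = \arg g'((z-z_0)/r)$ gives (3).

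The main obstacle is item (3): items (1) and (2) are direct algebraic rearrangements of the normalized growth and distortion estimates, but (3) requires the sharper pre-Schwarzian bound together with the observation that the ``diagonal'' term $2|\zeta|^2/(1-|\zeta|^2)$ becomes real along a radial ray. It is precisely this real-valued cancellation that produces the clean constant $2$ in front of the logarithm, rather than a larger universal constant that a crude modulus estimate would give.
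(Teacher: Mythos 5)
The paper states this theorem without proof, citing the standard literature (``we state the theorems as we use them without proof,'' with a pointer to~\cite{Z}); there is therefore no in-paper argument to compare against. Your proposal is the standard textbook derivation and is essentially correct: the affine renormalization to $g\in\cals$, the growth and distortion bounds from $|a_2|\le 2$, and the pre-Schwarzian inequality $\bigl|\zeta g''/g' - 2|\zeta|^2/(1-|\zeta|^2)\bigr|\le 4|\zeta|/(1-|\zeta|^2)$ integrated radially (so that the subtracted term is real and drops out of the imaginary part) are exactly the right ingredients, and the constant $2\log\frac{1+\eta}{1-\eta}$ comes out as you say.

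One small caveat on item~(1), inherited from the paper's own phrasing: since $t\mapsto t/(1+t)^2$ is increasing on $(0,1)$, ``restrict the growth estimate to $|w|\le\eta$'' gives the upper bound for all $z\in D(z_0,\eta r)$ but gives the stated lower bound $|f'(z_0)|\,\eta r/(1+\eta)^2\le |f(z)-f(z_0)|$ only when $|z-z_0|=\eta r$ (it obviously fails as $z\to z_0$). The paper in fact only invokes the lower bound at a fixed radius $|\xi|=\rho r$, so the intended reading is for $|z-z_0|=\eta r$, or equivalently the Koebe-type statement that $f(D(z_0,\eta r))\supset D\bigl(f(z_0),\,|f'(z_0)|\,\eta r/(1+\eta)^2\bigr)$. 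If you include this proof, it would be worth making that qualification explicit.
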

\begin{thm}
Let $f: D(z_0, r)\to \mathbb{C}$ be a univalent function, and $\eta<1$.  Then, for any $A, B\subset D(z_0, \eta r)$,  
$$\frac{(1-\eta)^4}{(1+\eta)^4}\frac{m(A)}{m(B)}\leq \frac{m(f(A))}{m(f(B))}\leq \frac{(1+\eta)^4}{(1-\eta)^4}\frac{m(A)}{m(B)}.$$
\end{thm}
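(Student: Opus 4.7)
The plan is to reduce the measure-ratio bound to the pointwise derivative-distortion bound of part (2) of the previous theorem. The key input is that, because $f$ is holomorphic and univalent, its Jacobian as a real map $\mathbb{R}^2\to\mathbb{R}^2$ equals $|f'(z)|^2$; hence the change-of-variables formula yields
$$m(f(E)) \;=\; \int_E |f'(z)|^2\, dm(z)$$
for any measurable $E\subset D(z_0,\eta r)$.

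With this area formula in hand, I would fix a convenient reference point $w_0\in D(z_0,\eta r)$ (for instance $w_0=z_0$) and invoke part (2) of the previous theorem, which bounds $|f'(z)|/|f'(w_0)|$ and $|f'(w_0)|/|f'(z)|$ by $T(\eta)=(1+\eta)^4/(1-\eta)^4$ for every $z\in D(z_0,\eta r)$. Squaring this estimate and integrating gives, on the one hand,
$$T(\eta)^{-2}|f'(w_0)|^2\, m(A) \;\le\; m(f(A)) \;\le\; T(\eta)^{2}|f'(w_0)|^2\, m(A),$$
and, by the same argument with $B$ in place of $A$, an analogous two-sided bound for $m(f(B))$. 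Forming the ratio then cancels the common factor $|f'(w_0)|^2$ and leaves
$$\frac{m(f(A))}{m(f(B))} \;\le\; T(\eta)^{k}\,\frac{m(A)}{m(B)}$$
for some absolute integer $k$, with the reciprocal inequality holding symmetrically.

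I do not expect any real obstacle here; this is the standard upgrade of a derivative-distortion estimate to an area-distortion estimate. The only genuinely finicky point is the bookkeeping of exponents, so that the final constant matches the form $(1+\eta)^4/(1-\eta)^4$ stated in the theorem. If the na\"{\i}ve bound obtained by taking $\sup_A |f'|^2$ and $\inf_B |f'|^2$ separately is too lossy, one should instead apply the sharper classical Koebe inequalities for $|f'|$ pointwise, $\frac{1-\eta}{(1+\eta)^3}|f'(z_0)| \le |f'(z)| \le \frac{1+\eta}{(1-\eta)^3}|f'(z_0)|$, and exploit the cancellation between the upper bound on $m(f(A))$ and the lower bound on $m(f(B))$ to extract the claimed fourth-power constant.
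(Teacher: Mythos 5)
Your approach---change of variables to write $m(f(E)) = \int_E |f'|^2\,dm$ and then control $|f'|$ by Koebe distortion---is the right one (the paper states this theorem without proof, so there is no internal argument to compare against). But the ``bookkeeping'' worry you flag at the end is decisive, not finicky: carried out correctly, the argument produces the constant $T(\eta)^2 = \frac{(1+\eta)^8}{(1-\eta)^8}$, not the $T(\eta)=\frac{(1+\eta)^4}{(1-\eta)^4}$ appearing in the statement, and no rearrangement of the estimate changes this. Your first pass (referring both $A$ and $B$ back to a single point $w_0$ via part (2) of the preceding theorem and then dividing) actually gives the even weaker $T(\eta)^4$; the tighter route you then suggest, squaring the classical pointwise bounds $\frac{1-\eta}{(1+\eta)^3}|f'(z_0)| \le |f'(z)| \le \frac{1+\eta}{(1-\eta)^3}|f'(z_0)|$ and taking the ratio of the upper bound for $m(f(A))$ against the lower bound for $m(f(B))$, gives $\frac{(1+\eta)^2/(1-\eta)^6}{(1-\eta)^2/(1+\eta)^6} = \frac{(1+\eta)^8}{(1-\eta)^8} = T(\eta)^2$. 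There is no further cancellation that recovers a fourth power.

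Moreover, $T(\eta)^2$ is sharp, so the theorem cannot be true as written. Take $z_0=0$, $r=1$, $f(z)=z/(1-z)^2$ the Koebe function, and let $A$, $B$ be disks of equal small radius $\epsilon$ centered at $\eta$ and $-\eta$. Then $f'(\eta)=(1+\eta)/(1-\eta)^3$, $f'(-\eta)=(1-\eta)/(1+\eta)^3$, and as $\epsilon\to 0$,
$$\frac{m(f(A))}{m(f(B))}\;\longrightarrow\;\frac{|f'(\eta)|^2}{|f'(-\eta)|^2}\cdot\frac{m(A)}{m(B)}\;=\;\frac{(1+\eta)^8}{(1-\eta)^8}\cdot\frac{m(A)}{m(B)},$$
which exceeds $T(\eta)\,\frac{m(A)}{m(B)}$ for every $\eta\in(0,1)$. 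So the exponent in the statement should be $8$, not $4$; this is best read as a slip in the paper, which states the result without proof and later uses it only for uniform distortion bounds where the precise exponent is irrelevant. With that correction, your argument, completed along the lines you sketch (and using the direct comparison $\sup_A|f'|^2 \le T(\eta)^2 \inf_B|f'|^2$ rather than routing both sets through $w_0$, which loses an extra factor of $T(\eta)^2$), is the right proof.
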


\subsection{Nevanlinna functions}
An important tool in studying meromorphic functions with finitely many critical points and finitely many asymptotic values is that they can be characterized by their Schwarzian derivatives.

\begin{defn} If $f(z)$ is a meromorphic function, its {\em Schwarzian derivative} is
$$
S(f) = (\frac{f''}{f'})' - \frac{1}{2}(\frac{f''}{f'})^2.
$$
\end{defn}
The Schwarzian differential operator satisfies the  chain rule condition,

$$ S(f\circ g) =S(f) g'^2 +S(g),$$
from which it is easy to deduce that if $f$ is a M\"obius transformation, $S(f)=0$, so that
$f\circ g$ and $g$ have the same Schwarzian derivative.

In \cite{N}, Chap. XI, \S 3, Nevanlinna,  shows how,  given a finite set of points in the plane and finite or infinite branching data for these points, this data defines, up to post-composition by a M\"obius transformation, a meromorphic function whose topological covering properties are determined by this data.   He proves,

 \medskip
\begin{thm}  The Schwarzian derivative of a meromorphic function with finitely many critical points and finitely many asymptotic values is a rational function.  If there are no critical points, it is a polynomial.  Conversely, if a meromorphic function has a rational Schwarzian derivative, it has finitely many critical points and finitely many asymptotic values. If the Schwarzian derivative is a polynomial of degree $m$, then the meromorphic function has $m+2$ asymptotic values and no critical points.
\end{thm}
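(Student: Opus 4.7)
The proof naturally splits into a forward direction and a converse, both tied to the fact that $f$ can be realized as a ratio of solutions of a second-order linear ODE whose coefficient is essentially $S(f)$.

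For the forward direction, the plan is to analyze $S(f)$ as a meromorphic function on $\hat{\mathbb C}$. My first step would be to observe that $S(f)$ is holomorphic wherever $f$ is locally injective, and that $S$ is invariant under post-composition by M\"obius transformations. Applying this invariance to $z\mapsto 1/f(z)$ shows $S(f)$ is regular at simple poles of $f$. A local expansion $f(z) - v \sim c(z-u)^k$ near a critical point of local degree $k$ shows that $S(f)$ has a double pole there with coefficient determined by $k$, so in $\mathbb{C}$ the Schwarzian $S(f)$ has only finitely many poles, all of order two. At $\infty$ one uses Nevanlinna's theory of the logarithmic ends: with only finitely many asymptotic tracts, the preimage surface of $f$ near $\infty$ has controlled topology, yielding a polynomial growth bound for $S(f)$ and hence its rationality. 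When there are no critical points, the only possible finite singularities disappear, and $S(f)$ is an entire function with polynomial growth, hence a polynomial.

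For the converse, the key step is that any meromorphic $f$ with $S(f)=Q$ can be written as $f = w_1/w_2$, where $w_1, w_2$ are linearly independent solutions of
\begin{equation*}
w'' + \tfrac{1}{2} Q(z)\, w = 0.
\end{equation*}
A direct computation confirms that $S(w_1/w_2) = Q$ and that the Wronskian $W = w_1 w_2' - w_1' w_2$ is constant, so $f' = -W/w_2^2$. When $Q$ is rational the ODE has finitely many singular points in $\mathbb{C}$; both the critical points of $f$ (zeros of $f'$, which can only be located over singularities of $Q$) and the asymptotic values (limits along paths to these singularities or to $\infty$) are finite in number. When $Q$ is a polynomial the ODE is regular on all of $\mathbb{C}$, so $w_1, w_2$ are entire, $W$ is a nonzero constant, and $f' = -W/w_2^2$ is never zero: $f$ has no critical points at all.

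The main obstacle, and the core counting step, is to show that when $Q$ is a polynomial of degree $m$ there are exactly $m+2$ asymptotic values. Here I would invoke the Liouville--Green (WKB) asymptotics: the dominant and recessive solutions of $w'' + \tfrac{1}{2}Q\,w=0$ at $\infty$ behave like
\begin{equation*}
w(z) \sim Q(z)^{-1/4}\exp\Bigl(\pm \int \sqrt{-Q(z)/2}\,dz\Bigr),
\end{equation*}
whose leading exponent is of order $z^{m/2+1}$. The Stokes rays where $\mathrm{Re}(z^{m/2+1}) = 0$ partition a neighborhood of $\infty$ into $m+2$ Stokes sectors. In each sector, a definite linear combination of $w_1, w_2$ decays while the complementary combination grows, so the quotient $f = w_1/w_2$ has a well-defined asymptotic limit along tracts inside that sector, and the standard Stokes connection analysis shows these $m+2$ limits are distinct and exhaust the asymptotic values of $f$. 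This Stokes-theoretic count is the technical heart of the argument, and is precisely what Nevanlinna carries out in Chapter XI, \S 3.
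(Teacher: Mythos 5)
The paper does not prove this theorem itself; it cites Nevanlinna \cite{N}, Chap.\ XI, \S 3, and uses the result as a black box. Your argument correctly reconstructs the classical route taken there (and in Hille's treatment \cite{H}): holomorphy and M\"obius invariance of $S$ handle the forward direction, the linear ODE $w''+\tfrac{1}{2}Qw=0$ together with the Wronskian identity $f'=-W/w_2^2$ gives rationality/entirety of the solutions and absence of critical points in the converse, and the Liouville--Green/WKB asymptotics yield the $m+2$ Stokes sectors coming from the exponent $z^{(m+2)/2}$.

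The one claim you should retract is that the $m+2$ sector limits are necessarily \emph{distinct}. The Stokes connection analysis shows only that the recessive solution changes from one sector to the next, so the asymptotic values attached to \emph{adjacent} sectors differ; non-adjacent tracts may well share a value. A standard example is $f(z)=\int_0^z e^{-t^2}\,dt$, whose Schwarzian is $S(f)=-2-2z^2$, of degree $m=2$: there are $m+2=4$ asymptotic tracts, but the asymptotic values along them are $\sqrt{\pi}/2,\ \infty,\ -\sqrt{\pi}/2,\ \infty$, only three of which are distinct. Thus the number $m+2$ should be read as the number of asymptotic tracts (logarithmic singularities over $\widehat{\mathbb{C}}$), not the number of distinct asymptotic values; this is also how the paper uses the theorem, where the ``asymptotic values'' $\lambda_1,\dots,\lambda_N$ are indexed by the $N$ tracts $T_1,\dots,T_N$ and may a priori repeat.
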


In the literature, meromorphic functions with polynomial Schwarzian are often called {\em Nevanlinna functions}  (See e.g. \cite{C,EM}.)

\medskip
 In this paper, we continue our study of the  properties of the dynamical systems these functions generate.  Given a Nevanlinna function, we can define the {\em orbit} $\{ f^n{z} \}$ for any point $z \in \CC$.   In general, these orbits are infinite, but if, for some $m \geq 0$, $f^m(z)=\infty$, the orbit is finite.  Such points are called {\em prepoles of order $m$}.

 The standard defintions of stable (Fatou) and unstable (Julia) behavior for rational maps carry over to points with infinite orbits; the prepoles are unstable.  In \cite{DK}, it is proved that the classification of stable behavior for Nevanlinna maps is the same as that for rational maps.  In particular, if all the singular points are unstable, the Julia set is the whole sphere.

The results in \cite{KK2} and  \cite{CJK4} together show
\begin{starthm} If $f$ is a Nevanlinna function with $N$ asymptotic values $\la_1, \ldots, \la_N$, and if for some $0 \leq k<N$, $\la_i, i=1, \ldots, k$ are prepoles,\footnote{If some $\la_i=\infty$, it is a ''prepole of order 0".}  and if the $\omega$-limit sets of the remaining $N-k$  asymptotic values are compact repellers,  then the Julia set is the Riemann sphere and  $f$ is ergodic there. 
\end{starthm}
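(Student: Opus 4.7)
The statement combines two previously established results. When $k=0$, so that all asymptotic values have $\omega$-limit sets that are compact repellers, the conclusion is the main result of \cite{KK2}. When $1\leq k<N$, so that some but not all asymptotic values are prepoles, the conclusion is Theorem A, proved in \cite{CJK4}. My plan is therefore to outline the common architecture of the proof of both cases, indicating where the presence of prepole asymptotic values forces additional work.

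The first step is to show that $J(f)=\hat{\CC}$. By the dynamical classification of Nevanlinna functions in \cite{DK}, any cycle of Fatou components must attract or absorb the orbit of some asymptotic value. Under our hypotheses each singular orbit either lands at $\infty$ (the prepole case) or accumulates on a compact repelling set; in neither case can such an orbit lie in a stable component. Hence $f$ has no Fatou components and $J(f)=\hat{\CC}$.

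For ergodicity, I would argue by contradiction: suppose there is a completely invariant measurable set $A\subset\CC$ with $m(A)>0$ and $m(\CC\setminus A)>0$, and show that $A$ has density one at almost every Lebesgue point, hence full measure. By Bock's dichotomy \cite{Bock}, combined with the structure of $P(f)$, for almost every $z$ infinitely many iterates $f^{n_j}(z)$ enter a small \emph{safe disk} $D=D(p,r_p)$ around a point $p$ of the compact repeller. On $D$ one can define univalent inverse branches $g_j:D\to\CC$ with $g_j(f^{n_j}(z))=z$. The Koebe distortion theorems quoted in Section~\ref{prelim} then imply that the $g_j$ have uniformly bounded distortion, so that $\frac{m(g_j(A\cap D))}{m(g_j(D))}$ is comparable to $\frac{m(A\cap D)}{m(D)}$; since the latter is bounded below and the images $g_j(D)$ shrink to $z$, the density of $A$ at $z$ equals $1$.

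The main obstacle, and the actual content of \cite{CJK4} as opposed to \cite{KK2}, is extending the inverse-branch construction when prepole asymptotic values are present. In that case typical orbits visit small neighborhoods of $\infty$ inside asymptotic tracts of $f$, and each inverse branch $g_j$ must be tracked through these tracts without losing distortion control. The remedy is the factorization of $f$ near $\infty$ into elementary factors, together with the factor-by-factor estimates developed in Section~\ref{prelim}; these yield distortion bounds for the tract-traversal portions of $g_j$. One also needs to verify that the Lebesgue measure of points escaping through the tracts and failing to return is negligible, so that the a.e.\ return to the safe disks used above is preserved. Combined with the safe-disk Koebe argument on the compact-repeller side, this gives ergodicity in the prepole-plus-repeller regime and completes the proof.
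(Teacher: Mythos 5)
Your proposal is correct and takes essentially the same approach as the paper: the paper does not re-prove this theorem but simply observes that it follows by combining the results of \cite{KK2} (the $k=0$ case, all $\omega$-limits compact repellers) and \cite{CJK4} (the $1\leq k<N$ case, some prepole asymptotic values), which is exactly the division you identify. Your further sketch of the no-Fatou-components argument, the Koebe density argument near the compact repeller, and the tract-traversal estimates needed when prepole asymptotic values are present accurately reflects the architecture of those two source papers.
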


In this paper we complete the answer question of the ergodicity of the action of  Nevanlinna functions whose Julia set is the whole sphere by proving
\begin{starthm1}
 If $f$ is a Nevanlinna function with $N$ asymptotic values $\la_1, \ldots, \la_N$, and if ALL of them are prepoles, then  its Julia set is the Riemann sphere $\hat\CC$ and $f$ is not ergodic there. 
 \end{starthm1}

Before getting into the details we sketch the main  ideas in the proof (see also \cite{Lyu1,Skor}).   We construct two disjoint sets $A$ and $B$ such that:
\begin{enumerate} 
\item   Both $A$ and $B$ have positive Lebesgue measure;
\item Both sets are wandering and their orbits are disjoint. 
\end{enumerate}  We form the sets $X_A=\cup_{n\geq 0}f^{-n}(\cup_{n\geq 1}f^n(A))$ and  $X_B=\cup_{n\geq 0}f^{-n}(\cup_{n\geq 1}f^n(B))$.  They are both invariant and both have positive Lebesgue measure;  this proves that the map is non ergodic. 

For the construction of $A$ and $B$, we use the fact  that  the map $f$ has $N$ asymptotic tracts and $N$ asymptotic values, each of which is a prepole. Therefore, each asymptotic tract is expansively mapped over all $N$ asymptotic tracts under some finite number of iterations. We fix an asymptotic tract and choose some quadrilaterals that, under an appropriate change of coordinate are squares.  We then show that the Lebesgue measure of the set of points whose orbit always lands on the fixed asymptotic tract is positive and that the set wanders to $\infty$;  these are  "almost" Markov chains because the map is expanding. If we choose the quadrilaterals so that the distance between them is large enough,  their orbits will be disjoint since the expansion rate on the quadrilaterals is different.    
\subsection{The behavior of $f$ in a neighborhood of infinity}

The proof of the main theorem depends on a careful study of the behavior of the Nevanlinna function $f$ in a neighborhood of infinity.  (See \cite{CJK4, DK, H, L}.)

 Let $f$ be a Nevanlinna function with Schwarzian derivative  $S(f) =2P(z)$, where the degree of  the polynomial $P(z)$ is $N-2$ and the  leading coefficient of $P(z)$ is the constant $a\in \mathbb{C}^*$.   The Schwarzian equation is related to the linear equation 
$ w''+P(z)w = 0$:  that is,  if $w_1, w_2$ are linearly independent solutions of this equation, then $$S(\frac{a w_1 +b w_2}{cw_1+dw_2})=2P(z).$$   In the special case, $N=2$,  the solutions $w_1, w_2$ can chosen as exponentials whose asymptotic tracts are half planes.   

This linear equation is often studied by making  changes of variable in the asymptotic tracts in order to estimate the behavior of $f$ in them.  The new variable turns the asymptotic tract into a half plane and the Liouville transformation turns the Schwarzian equation into an approximation of the equation in the case where $N=2$. The interested reader can find more details in the discussions in \cite{CJK4} and \cite{L}.   We follow the notation in those discussions.

Denote solutions to the congruence $$\arg a+N\theta \equiv 0\ \ \mod 2\pi$$ by $\theta_i$,
where $1\leq i\leq N$.
The {\em  critical rays} of $f$ are the half lines  $L_{i}=te^{ i\theta_i}, t>0$.  For a small $\epsilon_0>0$, define the sectors
 $$
 S_i=\{z:\ |\arg z-\theta_i|\leq \frac{2\pi}{N}-\epsilon_0\}
 $$for $i=1. \ldots,N$.
Thus the sector $S_i$ contains the critical ray $L_{i}$ and is contained between the critical rays $L_{i-1}$ and $L_{i+1}$.  (By convention, all indices are taken modulo  $N$).   If we denote the boundaries of the sectors by $B_i^{\pm}$, depending on the sign of $\arg z -\theta_i$, the critical ray $L_{i}$ is contained in a critical wedge $wed_{i}$ of angle $2\epsilon_0$ whose boundaries are the rays $B_{i-1}^+$ and $B_{i+1}^-$.

In each sector $S_i$, the function $f$ has a ``truncated solution'' which grows like $exp(z^{N/2})$.    More precisely, let $R>>0$ and $A_R=\{z:\ |z|>R\}$. Then in $S_i$,
 one can define the auxiliary variable
   $$\calz_i(z)=\int^z_{Re^{i\theta_i}} \sqrt{P(s)}ds=\frac{2\sqrt{a}}{N}z^{N/2}(1+o(1)), z\in S_i\cap A_R.$$
       We choose the branch of  $\sqrt{P(s)}$ so that  in the sector $S_i$, for any $z$ on the critical ray $L_{i}$, $\calz_i(z)$ is on the positive real line.   The image of $S_i$ under $\calz_i$ contains a
 sector in the $\calz_i$-plane,
     $$\cals_i=\{\calz_i=\calz_i(z) : \ |\arg \calz_i|<\pi-\epsilon\}\subset \calz_i(S_i)$$ for some  small $\epsilon >0$  depending on $N$ and $\epsilon_0$. In $\mathcal{S}_i$, we define $$\calu_i=\mathcal{S}_i\cap \{\calz_i: \ \Im \calz_i> c\} \text{ and }\call_i=\mathcal{S}_i\cap \{\calz_i: \ \Im \calz_i< -c\}$$ for some $c>>0$.
\begin{figure}
  \centering
  \includegraphics[width=4in]{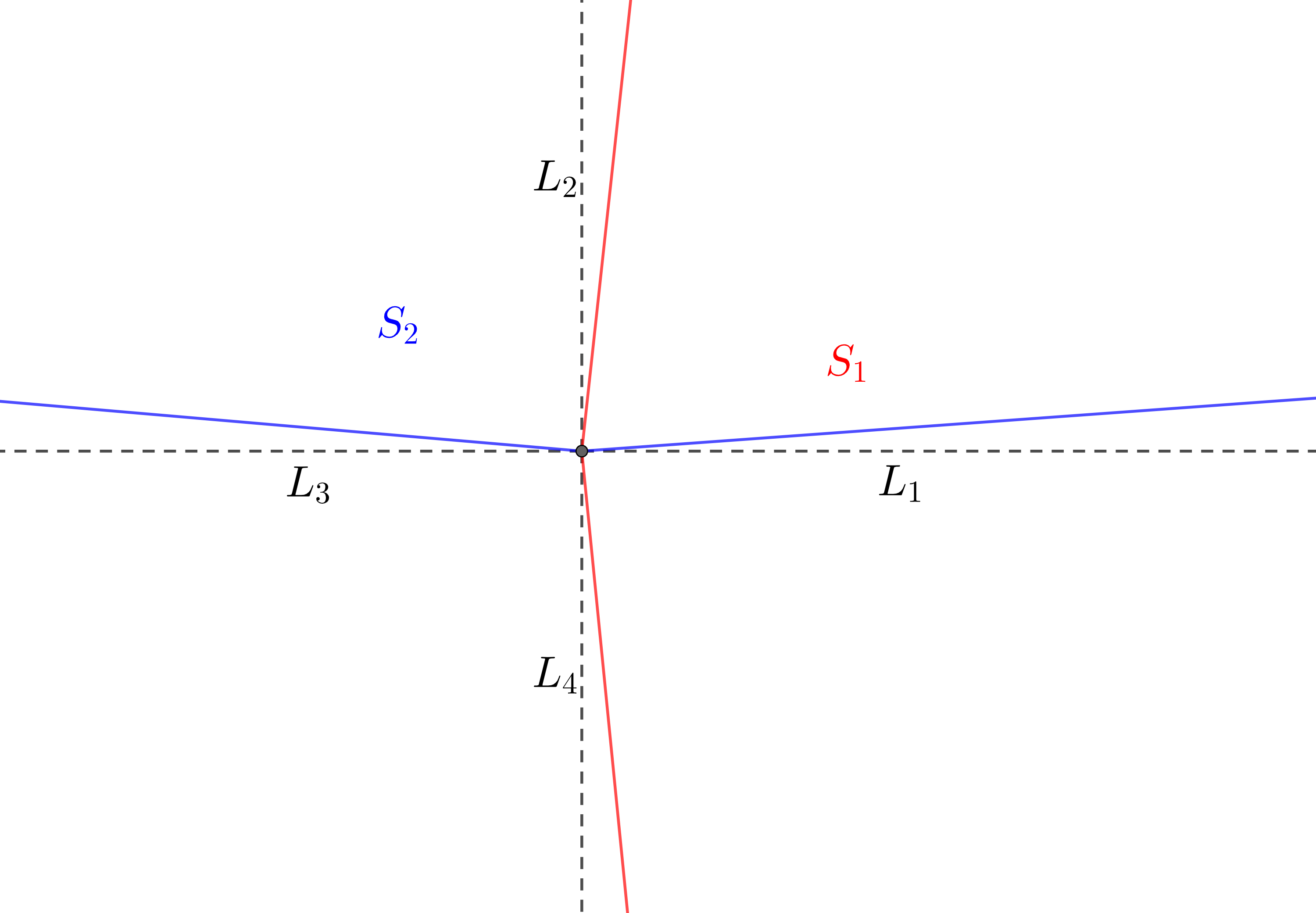}\\
\caption{The critical lines and sectors for $N=4$
}\label{fig:sectors}
\end{figure}

For $z \in S_i$ and $\calz_i=\calz_i(z)$,  we use the Liouville transformation to transform the linear equation to one in the variable $\calz_i$  that has a solution on $\cals_i$ defined by $F(\calz_i(z))=f(z)$.  The map $F(\calz_i)$ can be approximately expressed as
  \begin{equation}\label{map}
  f(z)=F(\calz_i)=\frac{A_ie^{i\calz_i}+B_ie^{-i\calz_i}}{C_ie^{i\calz_i}+D_ie^{-i\calz_i}}.\end{equation}

  The sets  $\calu_i$ and $\call_i$ are respectively asymptotic tracts for the asymptotic values
  $ B_i/D_i  \text{ and }  A_i/C_i$ of $F(\calz_i)$. Therefore, since $$T_i=\calz^{-1}_i(\calu_i) \text{ and } T_{i-1}=\calz^{-1}_i(\call_i)$$ are mapped by $f$ to punctured neighborhoods of the asymptotic values $\la_i$ and $\la_{i-1}$ of $f$,  $\la_i$ and $\la_{i-1}$ are also the asymptotic values of $F$;
  that is,\footnote{Since this holds for each pair of asymptotic values, $\la_i,\la_{i-1}$, it is clear both cannot be infinite.}
  $$
  \frac{B_i}{D_i}=\la_i\quad  \text{ and } \quad \frac{A_i}{C_i} = \la_{i-1}.
  $$

By equation (\ref{map}), on the asymptotic tracts  $\calu_i$ and $\call_i$, the map $F(\calz_i)$ can be expressed as a composition
 $$F(\calz_i)=\begin{cases} M_{i,U}\circ E_{U} \text{ for }\calz_i\in U_i, \\
 M_{i,L}\circ E_{L} \text{ for }\calz_i\in L_i\end{cases}$$ where
$$E_{U}(\calz_i)=e^{2i\calz_i}, \,  \,  E_{L}(\calz_i)=e^{-2i\calz_i}; $$
 $$M_{i,U}(\xi)=\frac{A_i\xi+B_i}{C_i\xi+D_i}, \, \text{ and }\, M_{i,L}(\xi)=\frac{A_i+B_i\xi}{C_i+D_i\xi}.$$
Note that $E_{U}$ and $E_{L}$ are infinite to one universal covering maps  of  $\calu_i$ and $\call_i$ onto the punctured disk
 $D^*=D^*(0, e^{-2c})$.
 Both the M\"obius maps $M_{i,U}$ and $M_{i,L}$ map
$D=D^* \cup \{0\}$  injectively onto neighborhoods $N_i$ and
$N_{i-1}$ of the asymptotic values $\la_i$ and $\la_{i-1}$.
Thus we obtain  factorizations of the truncated solutions in $ T_i$ and $T_{i-1}$
 $$f=M_{i,U}\circ E_U \circ \calz_i  \mbox{  and } f=M_{i,L} \circ E_L \circ \calz_{i}.$$

By hypothesis, $f^{k_i-1}$ and $f^{k_{i-1}-1}$ map $N_i$ and $N_{i-1}$ to neighborhoods $P_i$ and $P_{i-1}$  of the poles $p_i$ and $p_{i-1}$ and so $f$ maps both $P_i$ and $P_{i-1}$ to a neighborhood of infinity.  Because it is  simpler to estimate maps between finite regions,  we introduce a transformation that maps a neighborhood of infinity into a neighborhood  of the origin.  To this end, let $I(z)=1/z$  
  and set  $$f=I\circ g, \text{where } I=\frac{1}{z}, \, g(z)=\frac{1}{f(z)}$$  so that the map $g(z)$ maps both $P_i$ and $P_{i-1}$ into a neighborhood of the origin.
The maps 
$$h_{i, U}(\xi)=g\circ f^{k_{i}-1}\circ M_{i,U}(\xi)     \mbox{ and } h_{i,L}(\xi)=g \circ f^{k_{i-1}-1}\circ M_{i,L}(\xi)$$
both fix the origin and so map the disk $D(0, e^{-c}) $ into a neighborhood of $0$.

\begin{figure}
\includegraphics[width=6in]{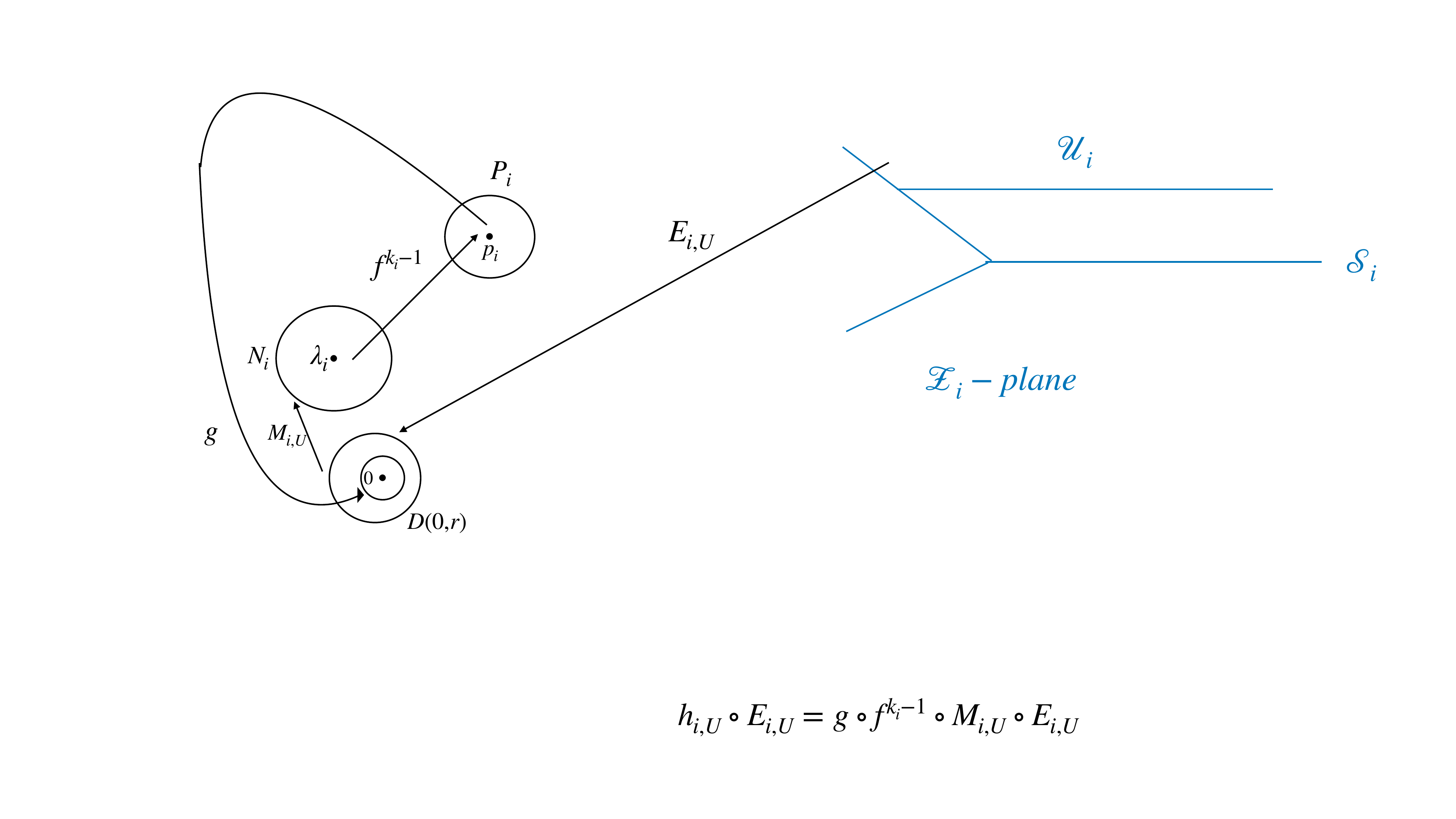}
\caption{The decomposition of $h_{i,U} \circ E_{i,U}$ as a map from the auxiliary   plane to the dynamic plane}
\label{decomph}
\end{figure}

Next,  define the following maps on $T_i$ and $T_{i-1}$ in the sector $S_i$ respectively as:
$$\varphi_{i, U}(z)= f^{k_{i}+1}(z)=I\circ h_{i,U}\circ E_U(\calz_i),\ \  \varphi_{i,L}(z)=f^{k_{i-1}+1}(z)=I\circ h_{i,L}\circ E_L(\calz_i).$$
The maps $\varphi_{i,U}$ and $\varphi_{i,L}$  are compositions of a Mobius map, a univalent map and the exponential map, and they map the respective asymptotic tracts $T_i$  and $T_{i-1}$ onto a neighborhood  $\Omega$ of $\infty$. See figures~\ref{decomph} and \ref{asymtract}.  Next,  for $z\in T_i\cup T_{i-1}\subset S_i$,  we define the maps:

$$
\Phi_i(z)=\begin{cases} \varphi_{i,U}(z), z\in T_{i} \\
\varphi_{i,L}(z), z\in T_{i-1}.
\end{cases}
$$

\begin{figure}
\begin{center}
  \includegraphics[width=6in]{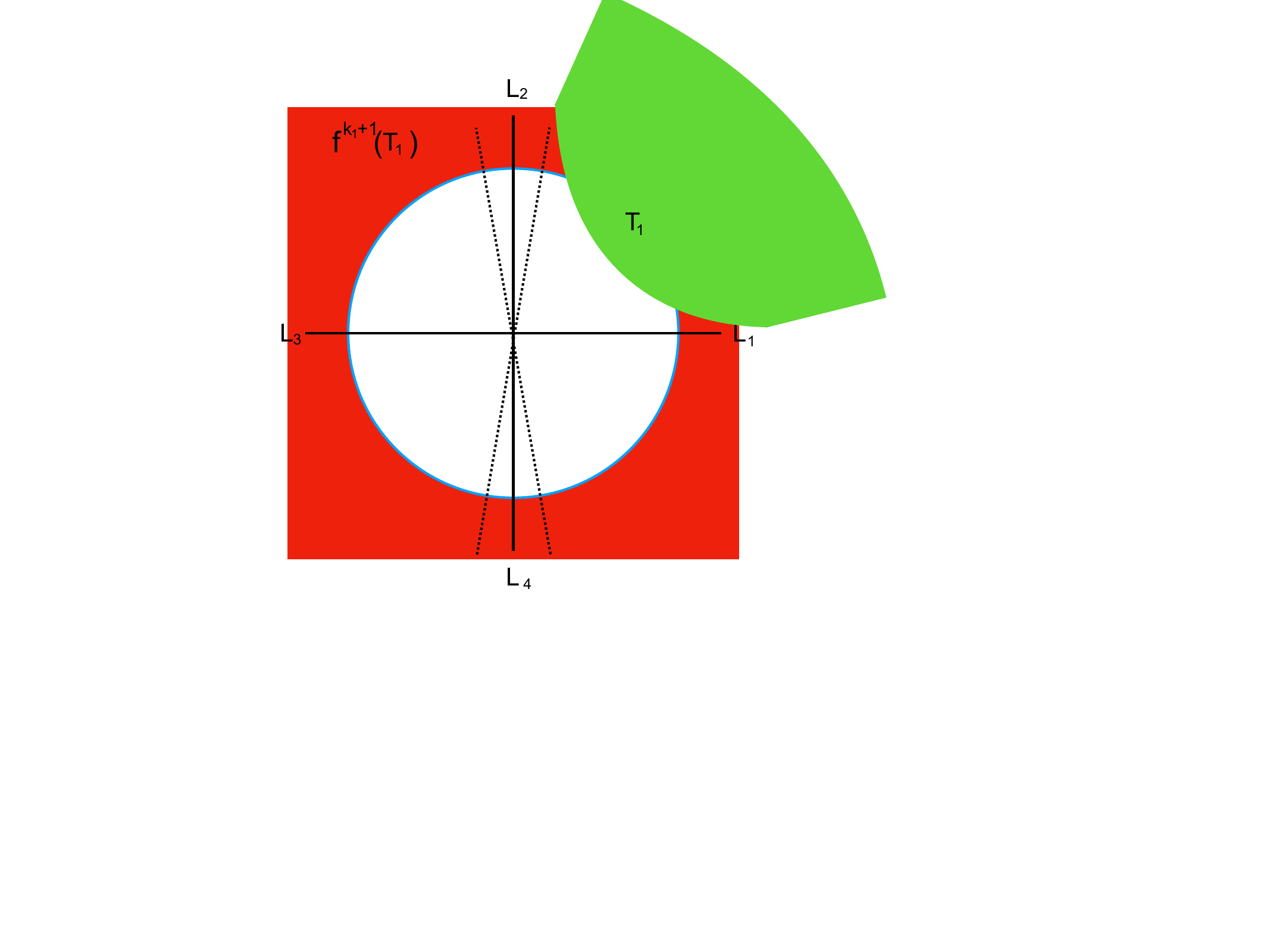}\\
 \caption{The map of the asymptotic tract $T_i$ (green) and its image under $f^{k_i+1}$ (red) }
  \label{asymtract}
  \end{center}
\end{figure}

Note that $T_i\subset (S_i\cap S_{i+1})\cup wed_i$. If $z\in  S_i\cap S_{i+1}$, there are two choices of the auxiliary variables $\calz_i,\ \calz_{i+1}$ for $z$. By the choice of  the branch of $\sqrt{P(z)}$,   $\calz_i(z)=-\calz_{i+1}(z)$ is in the upper half plane.  Therefore, if  
$$\xi= E_{i,U}(\calz_{i}(z))=E_{i+1,L}(\calz_{i+1}(z))$$  then $h_{i, U}(\xi)=h_{i+1, L}(\xi)$; that is, $\varphi_{i,U}(z)=\phi_{i+1,L}(z)$  so that  the map $\Phi_i$ is well-defined.

 Recall that the fixed constant $c>>0$ is the large constant that defines the sets $\call_i$ and $\calu_i$.   Assume it is chosen large enough so that for 
all $1\leq i\leq N$,
\begin{enumerate}
\item $f^j(\lambda_i)\notin \cup_{i=1}^N T_i, j=0, 1 \ldots, k_i-1$.
\item $h_{i,U}(z), h_{i,L}(z)$ are univalent on the filled disk $D(0,r) = D^*(0,r) \cup \{0\},$ where $r=e^{-c}$.
\end{enumerate}

 Define the constants
  $$
  m_{i}=\frac{1}{|h'_{i,U}(0)|}, m'_i=\frac{1}{|h'_{i,L}(0)|}\quad 1\leq i\leq N.
 $$
 and
 $m=\min_{i}\{m_i, m_i'\}, M=\max_{i}\{m_i, m'_i\}$.

\subsection{Basic calculations}
\label{calculations}

 Choose an $\alpha_0>c$, and for integers $k>0$\footnote{ We use $k$ here as an index and $k_i$ as the order of the prepole $\la_i$. This should not cause confusion.},  where $N$ is the number of asymptotic values of $f$, define the sequences of real numbers,  $$\alpha_k=e^{N\alpha_{k-1}}.$$

\begin{lemma}
If $z\in T_i$ and   $\calz_i=\calz_i(z)=x+iy$ with $y>\alpha_0$, then
\begin{equation}\label{lower}
|\Phi_i(z) |\geq m_{i} (e^{2y}-2e^{2c}+e^{4c-2y})
\end{equation}
and
\begin{equation}\label{upper}
|\Phi_{i}(z)|\leq m_{i} (e^{2y}+2e^{2c}+e^{4c-2y}).
\end{equation}

\end{lemma}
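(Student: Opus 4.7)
The plan is to unwind the factorization $\Phi_i(z) = \varphi_{i,U}(z) = I \circ h_{i,U} \circ E_U(\calz_i)$ on the tract $T_i$ and apply the first form of the Koebe Distortion Theorem to the univalent branch $h_{i,U}$.

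First I would set $\xi = E_U(\calz_i) = e^{2i\calz_i}$. Writing $\calz_i = x+iy$ with $y>\alpha_0>c$, an immediate computation gives $|\xi| = e^{-2y}$, which is strictly less than $r = e^{-2c}$. Hence $\xi$ lies in the disk $D(0,\eta r)$ where
\[
\eta \;=\; \frac{|\xi|}{r} \;=\; \frac{e^{-2y}}{e^{-2c}} \;=\; e^{2c-2y} \;<\; 1 .
\]
By construction $h_{i,U}$ is univalent on the full disk $D(0,r)$, fixes the origin, and satisfies $|h_{i,U}'(0)| = 1/m_i$, so the hypotheses of the Koebe Distortion Theorem apply with $z_0 = 0$.

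Next I would invoke part (1) of the Koebe Distortion Theorem, which yields
\[
\frac{1}{m_i}\cdot\frac{\eta r}{(1+\eta)^2} \;\leq\; |h_{i,U}(\xi)| \;\leq\; \frac{1}{m_i}\cdot\frac{\eta r}{(1-\eta)^2}.
\]
Substituting $\eta r = e^{-2y}$ and $\eta = e^{2c-2y}$, the two sides become
\[
\frac{1}{m_i}\cdot\frac{e^{-2y}}{(1+e^{2c-2y})^2} \;\leq\; |h_{i,U}(\xi)| \;\leq\; \frac{1}{m_i}\cdot\frac{e^{-2y}}{(1-e^{2c-2y})^2}.
\]

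Finally, since $\Phi_i(z) = 1/h_{i,U}(\xi)$, I would take reciprocals and multiply through by $e^{2y}$. Expanding $e^{2y}(1\pm e^{2c-2y})^2 = e^{2y} \pm 2e^{2c} + e^{4c-2y}$ gives
\[
m_i\bigl(e^{2y}-2e^{2c}+e^{4c-2y}\bigr) \;\leq\; |\Phi_i(z)| \;\leq\; m_i\bigl(e^{2y}+2e^{2c}+e^{4c-2y}\bigr),
\]
which is exactly \eqref{lower} and \eqref{upper}.

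There is no real obstacle here beyond verifying the Koebe hypotheses: the univalence of $h_{i,U}$ on $D(0,r)$ was built into the definition of $c$, and the normalization $h_{i,U}(0)=0$ together with $|h_{i,U}'(0)|=1/m_i$ lets the $h_{i,U}(0)$ term drop out of the Koebe estimate. The only minor care needed is to note that the hypothesis $y > \alpha_0 > c$ ensures $\eta < 1$ strictly, so both the upper and lower Koebe bounds are meaningful and the denominator $(1-e^{2c-2y})$ does not vanish.
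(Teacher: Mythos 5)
Your proof is correct and is essentially identical to the paper's argument: same factorization $\Phi_i = I\circ h_{i,U}\circ E_U\circ\calz_i$, same application of part (1) of the Koebe Distortion Theorem at $z_0=0$, and same algebraic expansion (your $\eta$ is the paper's $\rho=e^{2c-2y}$). No meaningful difference.
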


\begin{proof}   Suppose that $z\in T_i$ satisfies  $\calz_i=\calz_i(z)=x+iy$  with $ y>c$,  then
 $$\xi = E_U(\calz_i)=e^{2i\calz_{i}} \in D^*(0,r),$$
  $|\xi | =e^{-2y} =  \rho r $ and  $\rho=e^{2c-2y} \in(0,1)$. Since the map $h_{i,U}$ is univalent on $D(0,r)$ and $h_{i,U}(0)=0$,  Koebe's distortion theorem implies
$$\frac{|h'_{i,U}(0)|\rho r}{(1+\rho)^2}\leq |h_{i,U}(\xi)|\leq \frac{|h'_{i,U}(0))|\rho r}{(1-\rho)^2}.$$

Since  $I(z)=1/z$, 
$$\frac{(1-\rho)^2}{\rho r}\frac{1}{|h'_{i,U}(0)|}\leq |I(h_{i,U}(\xi))|\leq \frac{(1+\rho)^2}{\rho r}\frac{1}{|h'_{i,U}(0)|}.$$
Note that $$\frac{(1-\rho)^2}{\rho r}=\frac{1}{\rho r}-\frac{2}{r}+\frac{\rho}{r}=e^{2y}-2e^{2c}+e^{4c-2y},$$ and $$\frac{(1+\rho)^2}{\rho r}=e^{2y}+2e^{2c}+e^{4c-2y}.$$
Inequalities~(\ref{lower}) and ~(\ref{upper}) follow, and the proof of the lemma is complete.
\end{proof}

\begin{lemma}\label{anglelemma}
Suppose $z_1\in T_i$ and  $z_2\in T_j$ where $1\leq i, j\leq N$ (not necessarily distinct), and that $\Phi_i(z_1)\in S_{i'}$ and $\Phi_j(z_2)$ are in $S_{j'} $.
Set $\calz_i(z_1)=x_1+iy_1$ and $\calz_j(z_2)=x_2+iy_2$.  Assume that $N_0$ is sufficiently large and that for $k> N_0$, $y_1\geq \alpha_k$,  $y_2\geq y_1+2\alpha_{k-1}$.    If
 \begin{equation}\label{arg} |(\arg(\Phi_j(z_2))-\theta_{j'})|\geq \frac{4}{N\alpha_k}, \text{ or equivalently } |\sin(\calz_{j'}(\Phi_j(z_2))| \geq \frac{2}{\alpha_k},\end{equation}
then $\Im (\calz_{j'}(\Phi_j(z_2))) \geq  \Im (\calz_{i'}(\Phi_i(z_1)))+2\alpha_{k+1}$.
\end{lemma}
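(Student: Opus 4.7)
The plan is to combine the modulus estimate from the previous lemma with the asymptotic expansion $\calz_{i'}(w)=\tfrac{2\sqrt{a}}{N}w^{N/2}(1+o(1))$ near infinity, and then extract the imaginary part using hypothesis~(\ref{arg}). Throughout I take $N_0$ so large that the $o(1)$ errors---from the $\calz$-expansion, from~(\ref{lower})--(\ref{upper}), and from $\sin\theta=\theta(1+o(1))$ for $|\theta|\le 2/\alpha_{N_0}$---are all uniformly under control, say by a factor of $2$.

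First I apply~(\ref{lower})--(\ref{upper}) to $z_1,z_2$ to get $|\Phi_i(z_1)|\asymp e^{2y_1}$ and $|\Phi_j(z_2)|\asymp e^{2y_2}$ with constants between $m$ and $M$; feeding these into the expansion of $\calz$ yields
\[
|\calz_{i'}(\Phi_i(z_1))|\le C_1 M^{N/2} e^{Ny_1},\qquad |\calz_{j'}(\Phi_j(z_2))|\ge C_2 m^{N/2} e^{Ny_2},
\]
with $C_1,C_2$ of order $\tfrac{2\sqrt{|a|}}{N}$. Next, the branch of $\sqrt{P}$ was chosen so that $\calz_{i'}$ sends the critical ray $L_{i'}$ onto the positive real axis, i.e.\ $\arg\sqrt{a}+(N/2)\theta_{i'}\equiv 0\pmod{2\pi}$. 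Writing $w=|w|e^{i(\theta_{i'}+\phi)}$ this forces $\arg\calz_{i'}(w)=\tfrac{N}{2}\phi+o(1)$, which is precisely the ``equivalently'' clause in~(\ref{arg}): applying $\sin\theta=\theta(1+o(1))$ to $\phi=\arg\Phi_j(z_2)-\theta_{j'}$ with $|\phi|\ge 4/(N\alpha_k)$ gives $|\sin(\arg\calz_{j'}(\Phi_j(z_2)))|\ge(2/\alpha_k)(1-o(1))$. Passing to the side on which $\Im\calz_{j'}>0$ (the other case is symmetric up to sign and is not used in the applications), I obtain $\Im\calz_{j'}(\Phi_j(z_2))\ge (2C_2 m^{N/2}/\alpha_k)\,e^{Ny_2}$.

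To conclude, note that $y_2-y_1\ge 2\alpha_{k-1}$ gives $e^{N(y_2-y_1)}\ge e^{2N\alpha_{k-1}}=\alpha_k^2$, and $y_1\ge\alpha_k$ gives $e^{Ny_1}\ge\alpha_{k+1}$, so
\[
\Im\calz_{j'}(\Phi_j(z_2))-\Im\calz_{i'}(\Phi_i(z_1))\ge e^{Ny_1}\bigl(2C_2 m^{N/2}\alpha_k-C_1 M^{N/2}\bigr)\ge C_2 m^{N/2}\alpha_k\,\alpha_{k+1},
\]
once $\alpha_{N_0}\gg (M/m)^{N/2}$; since $C_2 m^{N/2}\alpha_k\ge 2$ for $k>N_0$ large, the right-hand side exceeds $2\alpha_{k+1}$, which is the claim. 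The main obstacle is the bookkeeping: three different $o(1)$ errors must be simultaneously absorbed by a single choice of $N_0$, and the ``gain'' $\alpha_k$ coming from the extra vertical distance $2\alpha_{k-1}$ must dominate both the $1/\alpha_k$ loss from the sine estimate and the constant ratio $(M/m)^{N/2}$ introduced by Koebe distortion. A secondary subtlety is the consistent branch analysis across the two possibly distinct sectors $S_{i'}$ and $S_{j'}$, which is what justifies the equivalence in~(\ref{arg}).
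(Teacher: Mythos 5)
Your proposal is correct and follows essentially the same approach as the paper's proof: you apply the lower/upper bounds from the preceding lemma to $|\Phi_i(z_1)|$ and $|\Phi_j(z_2)|$, feed them into the asymptotic expansion $\calz_{i'}(w)=\tfrac{2\sqrt a}{N}w^{N/2}(1+o(1))$ to bound $|\calz_{i'}(\Phi_i(z_1))|$ from above and $\Im\calz_{j'}(\Phi_j(z_2))$ from below via the sine hypothesis~(\ref{arg}), and then absorb the $1/\alpha_k$ loss using the growth $e^{N(y_2-y_1)}\ge\alpha_k^2$ and $e^{Ny_1}\ge\alpha_{k+1}$. The only difference is that you spell out the final arithmetic (which the paper compresses into ``thus for $k$ large enough'') and explicitly justify the ``equivalently'' clause in~(\ref{arg}) via the branch choice for $\sqrt P$, both of which are fine.
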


\begin{proof}
By hypothesis $z_1\in T_i$ and $z_2\in T_{j}$ with $y_2>y_1>\alpha_k>\alpha_0$.  Inequality (\ref{upper}) implies  that
\begin{equation}\label{upper2}
\Im (\calz_{i'}(\Phi_i(z_1))\leq |\calz_{i'}(\Phi_i(z_1))|\leq \frac{2|a|^{\frac{N}{2}}}{N}(m_{i} (e^{2y_1}+2e^{2c}+e^{4c-2y_1}))^{\frac{N}{2}}.
\end{equation}
Moreover, by  inequalities (\ref{lower}) and~(\ref{arg}),  when $k$ is large enough,
\begin{equation}\label{long}
\begin{split}
\Im (\calz_{j'}(\Phi_j(z_2)))&\geq |(\Phi_{j}(z_2) \sin(\arg(\calz_j(\Phi_j(z_2)))|\\
&\geq \frac{2|a|^{\frac{N}{2}}}{N}(m_{j} (e^{2y_2}-2e^{2c}+e^{4c-2y_2}))^{\frac{N}{2}}\frac{2}{\alpha_k}.\\
\end{split}
\end{equation}

Note that $y_2>y_1+2\alpha_{k-1}$, and $\alpha_k=e^{N\alpha_{k-1}}$, so that 
\begin{equation}\label{long}
\begin{split}
(m_{j} (e^{2y_2}-2e^{2c}+e^{4c-2y_2}))^{\frac{N}{2}}\frac{2}{\alpha_k}&>2(m_{j} (\frac{e^{2y_1+4\alpha_{k-1}}-2e^{2c}+e^{4c-2y_2})}{e^{2\alpha_{k-1}}})^{\frac{N}{2}}\\
&=2(m_{j} (e^{2y_1+2\alpha_{k-1}}-\frac{2e^{2c}-e^{4c-2y_2})}{e^{2\alpha_{k-1}}})^{\frac{N}{2}}\\
&>(m_{i} (e^{2y_1}+2e^{2c}+e^{4c-2y_1}))^{\frac{N}{2}}+N\alpha_{k+1}/|a|^{N/2}
\end{split}
\end{equation}
 for $k$ large enough, since $\alpha_k\to\infty$ and $y_1\geq \alpha_k$. Therefore $$\Im( \calz_{j'}((\Phi_j(z_2))))\geq \Im( \calz_{i'}(\Phi_i(z_1)))+2\alpha_{k+1}.$$

\end{proof}

Define a set of horizontal strips in $\calu_i$,
indexed by $k >0$  as
$$Hor_k^i=\{\calz_i=x+iy \in \cals_i:\ \alpha_{k}+2\alpha_{k-1}\leq y \leq \alpha_{k+1}-2\alpha_{k} )\}\cap \calu_i.$$
The pull-back of these strips $$
H_k^i = \calz_i^{-1}(Hor_k^i) \subset T_i.
$$ are strips in $T_i$.

The image $\Phi_i(H_k^{i})$ is an annular region $A_i$ in a neighborhood of infinity of the $z$-plane that,  for all $j=1, \ldots, N$, intersects  the sectors $S_j$,  and in particular, the asymptotic tracts $T_j$, the critical lines $L_j$ and  the wedges $wed_j$. See Figure~\ref{asymtract}.  Thus,  the maps $\calz_j$'s are well-defined for all $1\leq j\leq N$.     Define  the map $\Psi_{i,j}$ on the $\calz_i$-plane by the functional equation
$$\calz_j \circ \Phi_i = \Psi_{i,j} \circ \calz_i.$$

\medskip
 The maps $\Psi_{i,j}$ are infinite to one.  To create  regions of injectivity, divide each horizontal strip  $Hor_k^i$ into infinitely many rectangles $Rect_{k,n}^i $ of width $\pi$  and define $N$ disjoint sub-rectangles $Rect_{j,k,n}^i$ in each rectangle $Rect_{k,n}^i$
as:
$$Rect_{j,k,n}^i=\{\calz_i=x+iy:\  x_{j+1}^i+n\pi+3/(N\alpha_k)\leq x\leq x_{j}^i+n\pi-3/(N\alpha_k)\}$$
where $x_j^i$ satisfies $2x_j^i+\arg(h_{i,U}'(0))=-\theta_j$. Recall that $\theta_j$ is the $j$-th solution of $\arg a+N\theta\equiv 0 \mod 2\pi$.

  \begin{remark} Under the map $\calz_j^{-1}\circ \Psi_{i,j}$, each rectangle $Rect_{k,n}^i $ maps injectively onto the annulus $A_i$ in the $z$-plane. The vertical lines $x=x_j^i  + n\pi$ in $Hor_k^i$  are preimages of  the critical rays $L_{j}$.  The rectangles $Rect_{j,k,n}^i$ in each $Rect_{k,n}^i $ injectively map onto the disjoint asymptotic tracts in the annulus.
\end{remark}

\begin{figure}
  \centering
  \includegraphics[width=4in]{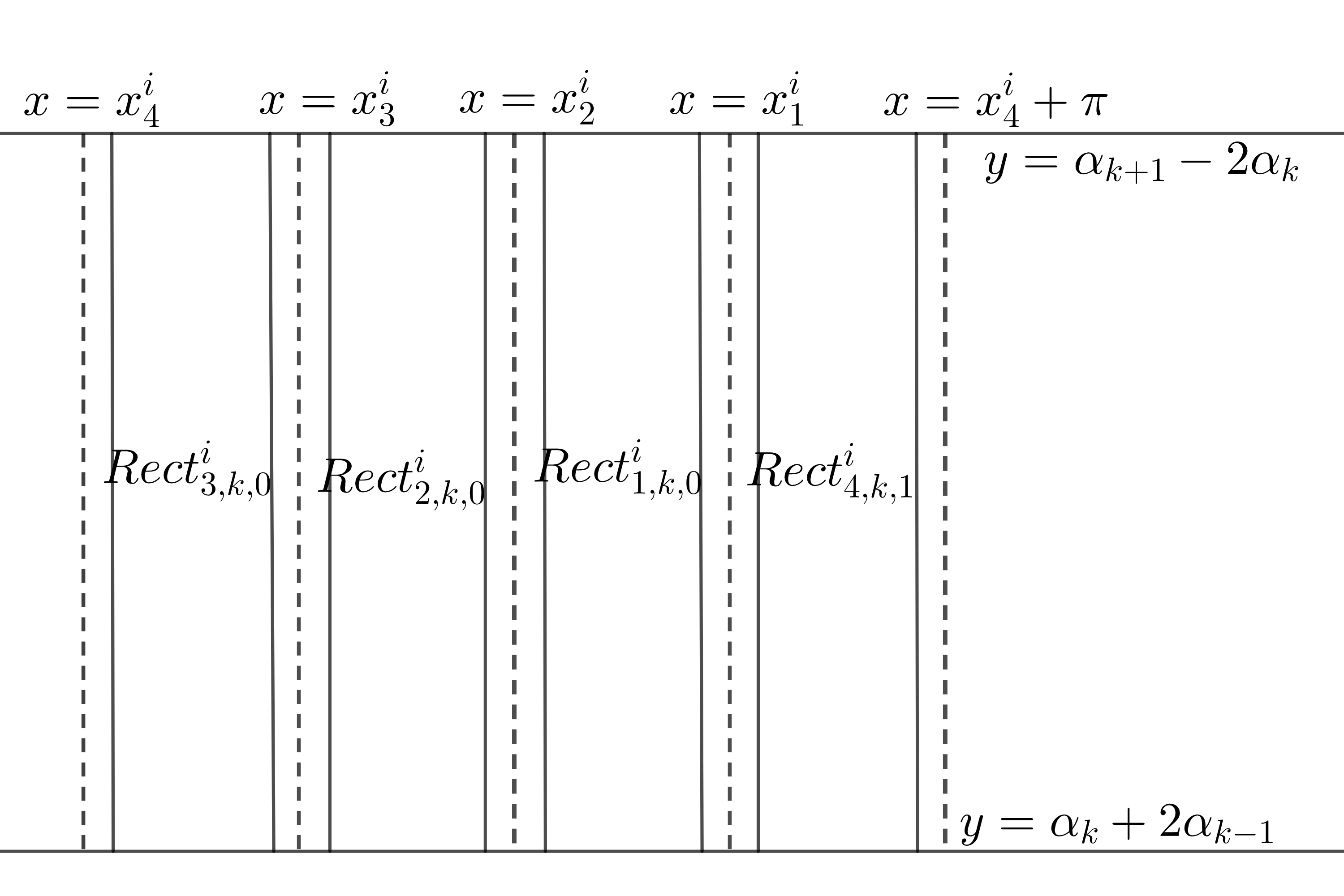}\\
  \caption{$Hor^i_k$ for $N=4$}  
\end{figure}

The next lemma states that in the $\calz_i$-plane, for any point $x+iy$ in these sub-rectangles $Rect_{j,k,n}^i$, the inequality (\ref{arg}) in Lemma \ref{anglelemma} is satisfied.  In particular, the sub-rectangles are mapped away from the critical rays and into an asymptotic tract. 

\begin{lemma} \label{largek}  There exists an integer $N_0$ such that if $k>N_0$, and if $\calz_i (z) \in Rect_{j,k,n}^i$ for some  $i$, then  for each $n$ and for all $j =1, \ldots, N$, 
$$|\arg(\Psi_{i,j}(z))|\geq \frac{2}{\alpha_{k}}.$$
\end{lemma}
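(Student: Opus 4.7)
The plan is to compute $\arg \Psi_{i,j}(\calz_i(z)) = \arg \calz_j(\Phi_i(z))$ directly from the factorization $\Phi_i = I \circ h_{i,U} \circ E_U$ on $T_i$ and then transfer the estimate into the $\calz_j$-plane via the asymptotic formula $\calz_j(w) = \frac{2\sqrt{a}}{N}w^{N/2}(1+o(1))$.  The point is that on $Rect_{j,k,n}^i$ the leading term of $\arg \Phi_i(z)$ is linear in $x$ and, by the choice of $x_j^i$ and of the rectangle's boundary offsets $\pm 3/(N\alpha_k)$, is kept bounded away from the critical direction $\theta_j$ by a definite amount; every nonlinear correction will then be shown to be doubly exponentially smaller than $1/\alpha_k$.

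Setting $\xi = E_U(\calz_i) = e^{2i\calz_i}$, we have $\arg\xi \equiv 2x \pmod{2\pi}$ and $|\xi| = e^{-2y}$. Since $h_{i,U}$ is univalent on $D(0,r)$ with $h_{i,U}(0)=0$, writing $h_{i,U}(\xi) = \xi \int_0^1 h_{i,U}'(t\xi)\,dt$ and applying part (3) of the Koebe distortion theorem to the ratio $h_{i,U}'(t\xi)/h_{i,U}'(0)$ yields $\arg h_{i,U}(\xi) = \arg h_{i,U}'(0) + 2x + \varepsilon_1$ with $|\varepsilon_1| = O(e^{-2y+2c})$. Since $I$ negates the argument, and since $2x_j^i + \arg h_{i,U}'(0) = -\theta_j$, we get
\[
\arg \Phi_i(z) - \theta_j \equiv 2(x_j^i + n\pi - x) - \varepsilon_1 \pmod{2\pi}.
\]
For $\calz_i \in Rect_{j,k,n}^i$ the quantity $x_j^i + n\pi - x$ lies in the interval $[3/(N\alpha_k),\, \pi/N - 3/(N\alpha_k)]$, so the main term has absolute value at least $6/(N\alpha_k)$. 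Because $\calz_j$ sends $L_j$ to the positive real axis and is asymptotic to $w \mapsto \frac{2\sqrt{a}}{N}w^{N/2}$, I obtain $\arg \calz_j(\Phi_i(z)) = \tfrac{N}{2}(\arg \Phi_i(z) - \theta_j) + \varepsilon_2$ with $\varepsilon_2 = o(1)$ as $|\Phi_i(z)| \to \infty$. Combining,
\[
|\arg \Psi_{i,j}(\calz_i(z))| \geq \tfrac{3}{\alpha_k} - \tfrac{N}{2}|\varepsilon_1| - |\varepsilon_2|.
\]

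It then remains to choose $N_0$ so that $\tfrac{N}{2}|\varepsilon_1|+|\varepsilon_2| \leq 1/\alpha_k$ whenever $k > N_0$. Since $y \geq \alpha_k + 2\alpha_{k-1}$, the Koebe error satisfies $|\varepsilon_1| = O(e^{-2\alpha_k})$; and by inequality~(\ref{lower}), $|\Phi_i(z)| \gtrsim m\, e^{2\alpha_k}$, so $|\varepsilon_2|$ decays at least like a negative power of $e^{\alpha_k}$. Both errors are doubly exponentially smaller than $1/\alpha_k$, so $|\arg \Psi_{i,j}(\calz_i(z))| \geq 2/\alpha_k$ uniformly in $j$ and $n$ once $N_0$ is large enough. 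The main obstacle is promoting the $o(1)$ in the asymptotic for $\calz_j$ to a quantitative bound uniform across all $N$ sectors, but this reduces to a routine estimate on the integral $\int_{Re^{i\theta_j}}^{w}\sqrt{P(s)}\,ds$ since $P$ is a polynomial of degree $N-2$, so essentially any polynomial decay rate in $1/|w|$ suffices.
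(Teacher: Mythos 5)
Your proof is correct and takes essentially the same route as the paper's: the factorization $\Phi_i = I \circ h_{i,U}\circ E_U$, Koebe part (3) to control the rotation contributed by $h_{i,U}$, and the asymptotic $\calz_j(w)=\tfrac{2\sqrt a}{N}w^{N/2}(1+o(1))$ to convert the $4/(N\alpha_k)$ angular gap in the $w$-plane into the $2/\alpha_k$ bound in the $\calz_j$-plane. The only real difference is expository: the paper estimates along the boundary lines $L_{j,k}$, $R_{j,k}$, $M_j$ of the sub-rectangles and then lets the conclusion for interior points follow (implicitly, by monotonicity of $\arg\Phi_i$ in $x$), whereas you compute $\arg\Phi_i(z)-\theta_j \approx 2(x_j^i+n\pi-x)$ pointwise for every $\calz_i\in Rect_{j,k,n}^i$; your version is a bit more self-contained and makes the uniform choice of $N_0$ (absorbing both $\varepsilon_1 = O(e^{2c-2\alpha_k})$ and $\varepsilon_2 = O(e^{-2\alpha_k})$ into $1/\alpha_k$) slightly more explicit than the paper does.
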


\begin{proof}

For the proof, fix  $i$ and $n$  and, for readability,  omit writing them.  Set
 $$M_j=  \{\calz=x_{j}+iy, \, y\geq \alpha_0  \}, $$
 $$L_{j,k}= \Big\{\calz=\Big(x_{j+1}+\frac{3}{N\alpha_k}\Big)+iy, \, y >\alpha_0 \Big\} \text{ and } $$
 $$R_{j,k}=  \Big\{\calz=\Big(x_{j}  - \frac{3}{N\alpha_k}\Big)+iy, \, y>\alpha_0 \Big\}.  $$
  Note that   the vertical lines $L_{j,k}$ and $R_{j,k}$ contain the respective left and right vertical borders of $Rect_{j,k}$ and the line $M_j$ lies in the complementary region between $R_{j,k}$ and $L_{j-1,k}$.

  We claim that
\begin{equation}\label{imaginarg}
\begin{split}
   \text{  if }  \calz  \in L_{j,k}  \text{ then } \theta_{j+1} - \arg(\Phi_i(\calz_i^{-1}(\calz)))>\frac{4}{N\alpha_k} \text{ and } \arg (\Psi_{i,j+1}(\calz)) \leq  -\frac{2}{\alpha_k};\\
   \text{ if  } \calz  \in R_{j,k}, \text{ then } \arg(\Phi_i(\calz_i^{-1}(\calz)))-\theta_{j}>\frac{4}{N\alpha_k} \text{ and   } \arg (\Psi_{i,j}(\calz))\geq \frac{2}{\alpha_k}. \,
\end{split}\end{equation}

 This says that  both $\Phi(\calz^{-1}(L_{j,k}))$ and $\Phi(\calz^{-1}(R_{j,k}))$
are bounded away from the images of the critical rays $L_i$ and $L_{i+1}$;  equivalently, the images of $L_{j,k}$ and $R_{j,k}$ under $\Psi$ are bounded away from the real line, as are the points in $Rect_{j,k}$, and proves the lemma.

\begin{figure}
 \centering
 \includegraphics[width=4in]{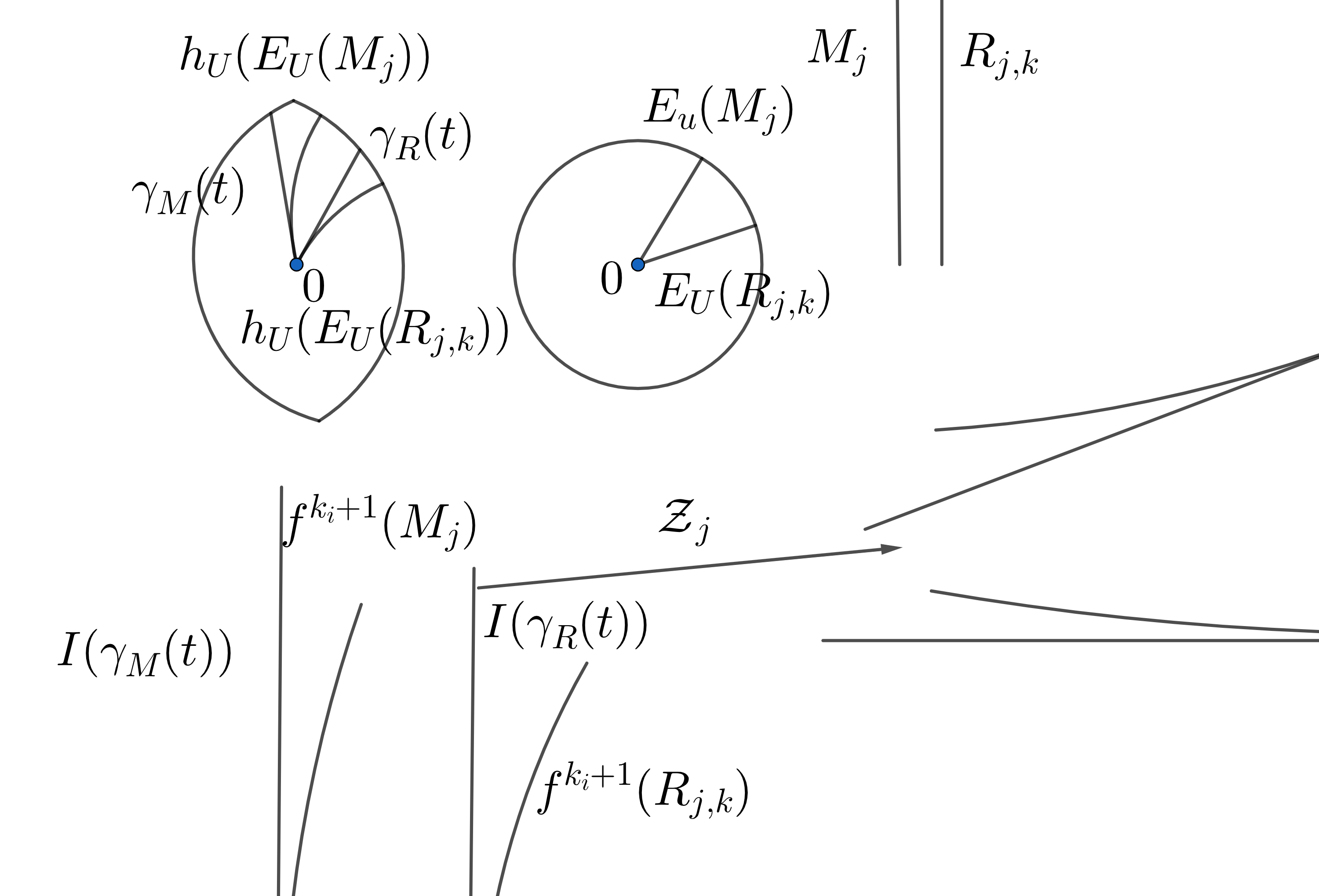}
  \caption{Rays and curves in lemma~\ref{largek}}
 
\end{figure}
To prove the claim, first consider the images of the half-lines $R_{j,k}$ and $M_{j}$ under $E_U$:

$$E_U(R_{j,k})=e^{-2y}e^{2i(x_{j} - \frac{3}{N\alpha_k}}) \mbox{ and } E_U(M_{j})=e^{-2y}e^{2i(x_{j})},\ y\geq \alpha_0.$$
They are line segments meeting at $0$ and  the angle between them satisfies $$\arg(E_U(R_{j,k}))-\arg(E_U(M_{j}))=-\frac{6}{N\alpha_k}.$$
Next, since $h_{U}: D(0,\rho) \to \mathbb{C}$ is conformal, the images $h_{U}(E_U(M_j))$ and $h_{U}(E_U(R_{j,k}))$ are curves with the initial point $0$. Let $\gamma_M(t)$ and $\gamma_R(t)$ be the lines tangent to these curves at $0$, respectively. Then
\begin{equation}\label{tangent}
\begin{split}
 \gamma_M(t)=te^{i(\arg(h_{U}'(0))+2x_{j})}, \; t\geq 0,  \text{  and }\\
\gamma_R(t)=te^{i(\arg(h_{U}'(0))+2x_{j}-\frac{6}{N\alpha_k})}, \; t\geq 0.
 \end{split}
\end{equation}
That is, the angle between $\gamma_M(t)$ and $\gamma_R(t)$ is equal to $6/(N\alpha_k)$.

Suppose   $\calz$ is on the line $R_{j,k}$, $\xi=E_{i,U}(\calz)$,  and $|\xi|=\rho r $,  with $|\rho|<e^{2c-2\alpha_k}<<1$.  Then,  if $k$ is large enough, by Koebe's distortion theorem,
  $$|\arg (h_{U}(\xi))-(\arg h_{U}'(0)+2x_{j}-\frac{6}{N\alpha_k}))|<2\ln(\frac{1+\rho}{1-\rho})<\frac{1}{N\alpha_k}.$$
  That is, $h_{U}(\xi)=r_0e^{i \theta}$ for some $r_0>0$, where
\begin{equation}\label{argdifference}
\theta-(\arg h_{U}'(0)+2x_{j})\geq \frac{4}{N\alpha_k}.
\end{equation}

If  $w = r_0 e^{i(\arg h_U'(0)+2x_{j})}$, then
$$I(w)=\frac{1}{r_0}e^{-i(\arg h_{U}'(0)+2x_{j})} \text{ and } \Phi_i(z)=I(h_{U}(\xi))=\frac{e^{-i\theta}}{r_0}.$$

    By the definition of $x_{j}$,  $\arg(I(w))=\theta_{j}$, and by inequality (\ref{argdifference}),\\
     $\arg(\Phi_i(\calz_i^{-1}(\mathcal{Z})))-\theta_{j}>4/(N\alpha_k)$.  Note that since $\calz$ is an arbitrary point in $R_{j,k}$,  by the definitions of $\theta_j$ and  $I(\gamma_M(t))$, it is mapped to the positive real line under the map $\mathcal{Z}_j$.  Since $\Psi_{i,j}=\mathcal{Z}_j\circ\Phi_i$,  we conclude
      $$\arg(\Psi_{i,j}(\calz))\geq 2/\alpha_k.$$

One can check, using similar arguments, that at each point  $z$ such that   $\calz(z)$ is on the line $L_{j,k}$,
 $$\theta_{j+1}-\arg(\Phi_i(\calz_i^{-1}(\calz)))\geq 4/(N\alpha_k).$$

 This completes the proof of the claim and hence the lemma. 
 \end{proof}

\begin{lemma}\label{mapping props}  Suppose $k>N_0$.  If   $\calz_i \in Rect^i_{j,k,n}$ for some $n$, then $ \Psi_{i,j}(\calz_i) \in Hor_{k+1}^j$.
\end{lemma}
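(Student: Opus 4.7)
The statement amounts to a two-sided bound
\[
\alpha_{k+1}+2\alpha_k \;\leq\; \Im\bigl(\Psi_{i,j}(\calz_i)\bigr) \;\leq\; \alpha_{k+2}-2\alpha_{k+1},
\]
together with the observation that $\Psi_{i,j}(\calz_i)$ lies in $\calu_j$. The plan is to read off the vertical extent of $\calz_i$ from the inclusion $Rect^i_{j,k,n}\subset Hor^i_k$, apply the argument lower bound of Lemma~\ref{largek} on $Rect^i_{j,k,n}$, and then convert the $|\Phi_i(z)|$-estimates (\ref{lower}) and (\ref{upper}) into upper/lower estimates for $\Im\Psi_{i,j}$ via the leading-order expansion $\calz_j(w)\sim\tfrac{2\sqrt{a}}{N}w^{N/2}$.

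For the lower bound, let $\calz_i=x+iy$ with $y\in[\alpha_k+2\alpha_{k-1},\alpha_{k+1}-2\alpha_k]$. By Lemma~\ref{largek}, $|\arg\Psi_{i,j}(\calz_i)|\geq 2/\alpha_k$, and the sign-of-argument analysis carried out in the proof of that lemma (see~(\ref{imaginarg})) shows that on $Rect^i_{j,k,n}$ the argument has the sign that makes $\Im\Psi_{i,j}(\calz_i)$ positive. Writing $\Psi_{i,j}(\calz_i)=\calz_j(\Phi_i(z))$ and invoking~(\ref{lower}), I would reproduce the key inequality~(\ref{long}) from the proof of Lemma~\ref{anglelemma} to obtain
\[
\Im\Psi_{i,j}(\calz_i)\;\geq\;\frac{2|a|^{N/2}}{N}\bigl(m_j(e^{2y}-2e^{2c}+e^{4c-2y})\bigr)^{N/2}\cdot\frac{2}{\alpha_k}.
\]
Since $y\geq\alpha_k+2\alpha_{k-1}$ and $\alpha_k=e^{N\alpha_{k-1}}$, we have $e^{Ny}\geq e^{N\alpha_k+2N\alpha_{k-1}}=\alpha_{k+1}\alpha_k^{2}$, so this lower bound is at least of order $\alpha_{k+1}\alpha_k$, which comfortably exceeds $\alpha_{k+1}+2\alpha_k$ once $N_0$ is enlarged.

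The upper bound comes directly from~(\ref{upper2}):
\[
\Im\Psi_{i,j}(\calz_i)\leq|\Psi_{i,j}(\calz_i)|\leq\frac{2|a|^{N/2}}{N}\bigl(m_i(e^{2y}+2e^{2c}+e^{4c-2y})\bigr)^{N/2}.
\]
With $y\leq\alpha_{k+1}-2\alpha_k$ and $\alpha_{k+2}=e^{N\alpha_{k+1}}$, we have $e^{Ny}\leq\alpha_{k+2}e^{-2N\alpha_k}=\alpha_{k+2}/\alpha_{k+1}^{2}$, which is dwarfed by $\alpha_{k+2}-2\alpha_{k+1}$ for $k$ sufficiently large. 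Combined with the lower bound above, this places $\Im\Psi_{i,j}(\calz_i)$ in the required interval; and since $\arg\Psi_{i,j}(\calz_i)$ is positive and stays bounded away from $\pi$ (again by the branch analysis in Lemma~\ref{largek}, whose trimming of the rectangles by $3/(N\alpha_k)$ is exactly what prevents the argument from approaching $\pm\pi$), the image lies in $\cals_j\cap\{\Im>c\}=\calu_j$, hence in $Hor^j_{k+1}$. I expect the main obstacle to be the bookkeeping in this branch-choice step rather than the two imaginary-part inequalities, which follow almost verbatim from estimates already compiled in the proofs of Lemmas~\ref{anglelemma} and~\ref{largek}.
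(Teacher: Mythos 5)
Your proof is correct and follows essentially the same route as the paper's: both derive the upper bound on $\Im\Psi_{i,j}(\calz_i)$ directly from inequality~(\ref{upper}), and both obtain the lower bound by combining Lemma~\ref{largek}'s argument estimate $|\arg\Psi_{i,j}(\calz_i)|\geq 2/\alpha_k$ with the growth estimate~(\ref{long}) from the proof of Lemma~\ref{anglelemma}, concluding $\Im\Psi_{i,j}(\calz_i)\geq 2\alpha_{k+1}>\alpha_{k+1}+2\alpha_k$. Your extra remarks on why the image lies in $\calu_j$ (sign of the argument and the $3/(N\alpha_k)$ trimming keeping it within $\cals_j$) make explicit a point the paper leaves implicit, but this is a clarification rather than a different argument.
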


\begin{proof} Suppose $k>0$ and $\calz_i\in Rect^i_{j,k,n}$.  Then if
$\calz_i=x+iy$,
$$
\alpha_k+2\alpha_{k-1} \leq  y \leq \alpha_{k+1}-2\alpha_k.
$$
By
  inequality (\ref{upper}),
   $$\Im  \Psi_{i,j}(\calz_i) < | \Psi_{i,j}(\calz_i)|\leq\frac{2\sqrt{|a|}}{N} \Big(m_{i}(e^{2\alpha_{k+1}-2\alpha_k}+2e^{2c}+e^{4c+2y})\Big)^{N/2}<\alpha_{k+2}-2\alpha_{k+1}.$$

Since $\calz_i \in Rect^i_{j,k,n}$,  by Lemma~\ref{largek},
$|\arg(\Phi_{i,j}(\calz))|>2/\alpha_k$.  Because $\calz_i \in Rect^i_{j,k,n}$,  $\Im \calz_j  \geq \alpha_k+2\alpha_{k-1}$; thus Lemma \ref{anglelemma}  implies 
$$\Im (\Psi_{i,j}(z))) \geq 2\alpha_{k+1}>\alpha_{k+1}+2\alpha_k;$$
 Therefore  $\Psi_{i,j}(z))\in Hor_{k+1}^j$.
\end{proof}

\section{ Disjoint wandering sets of positive Lebesgue measure}\label{wanderingsets}
    To construct the wandering sets it is more convenient to  work with the maps $\Psi_{i,j}$ on the auxiliary planes and then pull back.  The first step is to estimate the expansion factor.

\begin{prop}\label{expanding}  Fix the point  $\calz_i^*$ in the $\calz_i$ plane  and assume that   $\Psi_{i,j}(\calz_i^*) \in Hor_k^j$;  then
$$
|(\Psi_{i,j})'(\calz_i^*)| \geq \frac{\alpha_{k}}{4\pi}.
$$
\end{prop}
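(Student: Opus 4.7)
The plan is a direct chain-rule computation. Using the factorization $\Psi_{i,j}=\calz_j\circ I\circ h_{i,U}\circ E_U$ read in the $\calz_i$-variable (the $\calz_i^{-1}\circ\calz_i$ on the inside cancels), I set $u=\calz_i^*$, $\xi=e^{2iu}$, $w=1/h_{i,U}(\xi)=\Phi_i(\calz_i^{-1}(u))$, and $v=\Psi_{i,j}(u)=\calz_j(w)$. The chain rule together with $I'(\eta)=-1/\eta^{2}=-w^{2}$ and $E_U'(u)=2i\xi$ yields
$$
\Psi_{i,j}'(u)=-2i\xi\,h_{i,U}'(\xi)\,w^{2}\,\calz_j'(w).
$$

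The next step is to trade $w^{2}\calz_j'(w)$ for $v$. Since $\calz_j(w)=\tfrac{2\sqrt{a}}{N}w^{N/2}(1+o(1))$ and $\calz_j'(w)=\sqrt{P(w)}=\sqrt{a}\,w^{(N-2)/2}(1+o(1))$ as $|w|\to\infty$, one has $w\,\calz_j'(w)=\tfrac{N}{2}v(1+o(1))$, and therefore
$$
|\Psi_{i,j}'(u)|=N\cdot\frac{|\xi|\,|h_{i,U}'(\xi)|}{|h_{i,U}(\xi)|}\cdot|v|\cdot(1+o(1)).
$$
The middle ``geometric'' factor is controlled by Koebe's distortion theorem applied to the univalent map $h_{i,U}:D(0,r)\to\CC$ with $h_{i,U}(0)=0$: writing $|\xi|=\rho r$, the bounds in the theorem imply
$$
\frac{|\xi|\,|h_{i,U}'(\xi)|}{|h_{i,U}(\xi)|}\ge\frac{(1-\rho)^{6}}{(1+\rho)^{6}}\longrightarrow 1\quad\text{as }\rho\to 0.
$$

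The hypothesis $v=\Psi_{i,j}(\calz_i^*)\in Hor_k^j$ supplies the two remaining ingredients. First, directly from the definition of the strip, $|v|\ge\Im v\ge\alpha_k+2\alpha_{k-1}>\alpha_k$. Second, the upper estimate~(\ref{upper}) bounds $|v|$ from above by a constant multiple of $(m_i e^{2\Im u})^{N/2}$, so that a lower bound on $|v|$ (equivalently, growth of $k$) forces $\Im u$ to grow, hence $\rho=e^{2c-2\Im u}\to 0$. Choosing a threshold $N_0$ for which both the Koebe factor $(1-\rho)^6/(1+\rho)^6$ and the $(1+o(1))$ term are bounded below by, say, $1/2$, one gets $|\Psi_{i,j}'(u)|\ge\tfrac{N}{2}|v|\ge\tfrac{N}{2}\alpha_k$ whenever $k>N_0$, which comfortably dominates $\alpha_k/(4\pi)$; the loose factor $4\pi$ in the statement is there precisely to absorb all these multiplicative errors.

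The one obstacle is bookkeeping: choosing $N_0$ uniformly so that the Koebe distortion constants on $h_{i,U}$, the error in the $\calz_j$ asymptotic near infinity, and the passage between the $u$-plane and the $w$-plane combine into a total multiplicative loss no worse than $2\pi N$. No ingredient beyond the chain rule, Koebe's distortion theorem, and the already-established asymptotics for $\calz_j$ and $h_{i,U}$ near infinity is required.
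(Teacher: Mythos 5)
Your proof is correct in spirit and, after fixing minor bookkeeping, does give the bound (in fact a stronger bound of order $N\alpha_k/2$), but it takes a genuinely different route from the paper. The paper's proof is an \emph{inverse} Koebe argument: it places a disk $D_k=D(\calz_j^*,\alpha_k/2)$ around the image point, observes that a univalent branch $G$ of $\Psi_{i,j}^{-1}$ exists on $D_k$ (the pullback of $D_k$ stays away from the singular orbits), applies the Koebe $\tfrac14$-theorem to $G$, and then uses the $\pi$-periodicity of $E_U(u)=e^{2iu}$ in the real direction to conclude that the Koebe disk inside $G(D_k)$ must have radius $<\pi/2$, which rearranges directly to $|\Psi_{i,j}'|\ge\alpha_k/(4\pi)$. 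That argument is soft: it needs no asymptotics for $\calz_j$, no estimate of $\rho=e^{2c-2\Im u}$, and it works uniformly for every $k>0$. Your argument, by contrast, is a forward chain-rule computation that funnels the expansion into the factor $w\,\calz_j'(w)\approx \tfrac{N}{2}\calz_j(w)$ and requires three separate error terms (the Koebe factor for $h_{i,U}$, the $1+o(1)$ in the $\calz_j$ asymptotic, and the passage $|w|\to\infty$) all to be controlled by taking $k$ large. Since $Hor_k^j$ forces $|v|\ge\alpha_k$ and hence $|w|\gtrsim\alpha_k^{2/N}$, these errors do indeed vanish, so your proof is valid for $k$ above some threshold $N_0$ --- which covers every application in the paper --- but strictly speaking it does not establish the proposition as stated for small $k$, whereas the paper's proof does. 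You should also note that your computation treats only the branch $\Phi_i=I\circ h_{i,U}\circ E_U$, i.e.\ $\calz_i^*$ in the upper region $\calu_i$; the symmetric case $\calz_i^*\in\call_i$, using $h_{i,L}$ and $E_L$, must be handled identically. One thing your computation does buy that the paper's does not is the explicit constant $N/2$ in place of $1/(4\pi)$, and a clear identification of where the expansion comes from (the factor $|v|$, i.e.\ the height of the target strip), which could be useful if sharper distortion control were ever needed.
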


\begin{proof}
 Set $\calz_j^*=\Psi_{i,j}(\calz_i^*) \in Hor_{k}^j$; then
$\alpha_{k}+2\alpha_{k-1}\leq  \Im (\calz_j^*) \leq \alpha_{k+1}-2\alpha_{k}$.

Let  $D_k=D(\calz_j^*, \alpha_{k}/2)$. By our choices of $c$ and $\alpha_0$,   the orbits of all  the $\la_i$'s are outside its preimage, $\calz_j^{-1}(D_k)$.  It follows that
    there is a univalent branch, $G$  of $\Psi_{i,j}^{-1}$ defined on $D_k$. By Koebe's $\frac{1}{4}$-theorem,
$$D\Big(\calz_i^*, \frac{|G'(\calz_j^*)|}{4}\frac{\alpha_k}{2}\Big)\subset G( D_k).$$
Because $\Psi_{i,j}$ is univalent  on $G(D_k)$, the radius of $G(D_k)$  is less than $\pi/2$; that is,
$$
\frac{|G'(\calz_j^*)|}{4}\frac{\alpha_k}{2}\leq \frac{\pi}{2}.
$$
This implies that
$$
|\Psi_{i,j}'(\calz_i^*)|\geq \frac{\alpha_k}{4\pi}.
$$
\end{proof}

Define a family  $\mathcal{G}^i=\{S^i_{m,n}; m,n \in \mathbb{Z} \}$, where $S^i_{m,n}$ is a square bounded by the straight lines in the $\calz_i$-plane given by
$$x=x^i_N+n\pi, \;x=x^i_N+(n+1)\pi, \; y=m\pi, \;y=(m+1)\pi .$$

For each $k>0$, let $\mathcal{G}^i_k\subset \mathcal{G}^i$ be the collection of $S_{m,n}^i\subset Hor_k^i$.  It will be convenient to use the index $\delta^i$ where $S_{\delta^i}=S_{m,n}^i$.  Note that since $x_N^i$ is fixed, each such square lies between the vertical lines
$$
V_n=V(x^i_N+n\pi)=\{\calz=x^i_N+n\pi+iy, y > \alpha_0\} $$ and $$ V_{n+1}=V(x^i_N+(n+1)\pi)=\{\calz=x^i_N+(n+1)\pi+iy), y > \alpha_0\}.
$$

Denote $\mathcal{G}=\cup_{i=1}^N\cup_{k>0} \calg_k^i$.

For a square $S_{\delta^i}\in \mathcal{G}^i_k$, define the rectangles $\calr_{\delta^i}^j=S_{\delta^i}\cap Rect_{j,k,n}^i$. It follows from the definition of $Hor_{k}^i$    that
 $$\frac{m(S_{\delta^i}\setminus (\cup_{j=1}^N\calr_{\delta^i}^j))}{m(S_{\delta^i})}\leq \frac{6}{\alpha_k}.$$

By Lemmas~\ref{largek} and~\ref{mapping props}, for each $j=1,\ldots, N$,  $ \Psi_{i,j} $ maps the rectangle $\calr_{\delta^i}^j$ onto a simply connected domain contained in the horizontal strip  $Hor_{k+1}^j$.  

Let $\calu^{j}(S_{\delta^i})$ denote the union of all the squares $S_{\delta^j}$
entirely contained in the interior of $\Psi_{i,j}(\calr_{\delta^i}^j)$:
   $$\calu^j(S_{\delta^i}) = \cup S_{\delta^j} \text{  where  } S_{\delta^j}\subset {\rm Interior} ( \Psi_{i,j}(\calr_{\delta^i}^j))\subset Hor_{k+1}^j. $$ 
  Note that because all the squares have side length $\pi$,
    $$\rm{dist}(\partial \Psi_{i,j}(\calr_{\delta^i}^j), \partial \calu^j(S_{\delta^i}))\leq \sqrt{2}\pi.$$  Otherwise,   more squares could be added inside $\Psi_{i,j}(R^i_{\delta_j})$.

\begin{figure}
 \centering
 \includegraphics[width=4in]{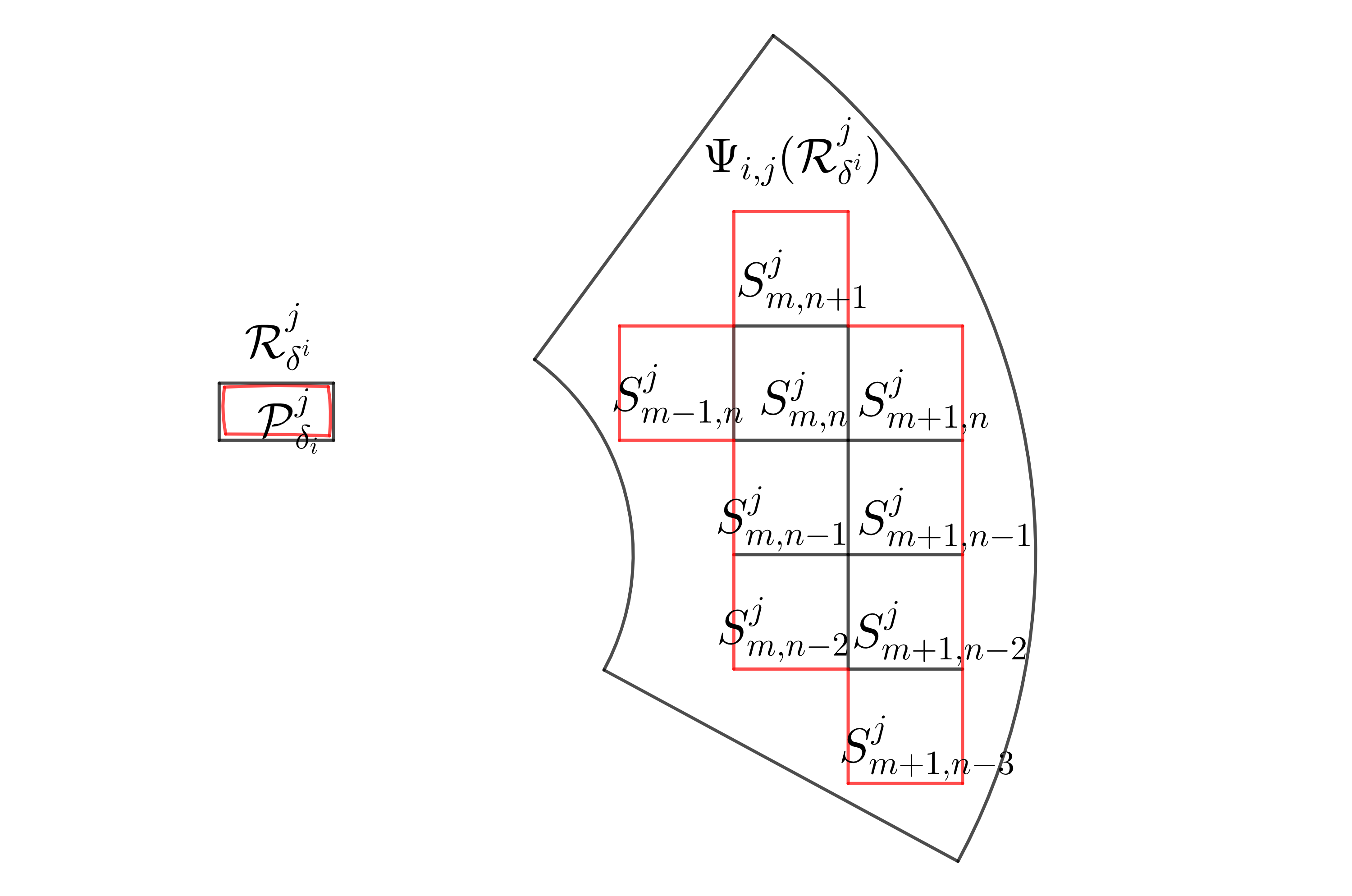}
  \caption{The definition of $\mathcal{P}_{\delta^i}^j$}
   \label{markov}
\end{figure}

For each $j=1, \ldots,N$, let $\calp_{\delta^i}^j=\calp^j_{\delta^i}(S_{\delta^i})=\Psi_{i,j}^{-1}(\calu^j(S_{\delta_0^j}))$. Since it is the pullback of a union of squares, the following inequality shows that each $\calp^j_{\delta^i}$ is a domain  whose area is very close to that of the rectangle   $\calr_{\delta^i}^j$, see Figure~\ref{markov}. 
 Proposition~\ref{expanding} shows that the expansion factor for $\Psi_{i,j}$ on $S_{\delta^i}$ is at least $\alpha_k/(4\pi)$; this says that
 $${\rm dist}(\partial \calp^j_{\delta^i}, \partial \calr_{\delta^i}^j) < \frac{\rm{dist}(\partial \Psi_{i,j}(\calr_{\delta^i}^j), \partial \calu^j(S_{\delta^i}))}{\frac{\alpha_k}{4\pi}}\leq  \frac{ 4\pi^2 \sqrt{2}}{\alpha_k}$$
   so that the set  $\calr_{\delta^i}^j\setminus \calp^j_{\delta^i}$ is contained in a $4\pi^2 \sqrt{2}/\alpha_k$  neighborhood of
$\partial \calr_{\delta^i}^j$.  Thus for the rectangles $\calr_{\delta^i}^j$,
 $$\frac{m(\calr_{\delta^i}^j\setminus \calp^j_{\delta^i})}{m(\calr_{\delta^i}^j)}\leq \frac{4\sqrt{2}\pi^2}{\alpha_k}$$
and for the full square $S_{\delta^i}$,

\begin{equation}
\label{approxsq}
\frac{m(S_{\delta_0^i}\setminus (\cup_{i=j}^N\calp^j_{\delta^i}))}{m(S_{\delta_0^i})}\leq \frac{4 \sqrt{2}\pi^2+6}{\alpha_k}.
\end{equation}
Thus each $\calp^i_{\delta_j}$ is ``almost'' equal to the rectangle $\calr_{\delta^i}^j$ and $\cup_{i=1}^N\calp^j_{\delta^i}$ is ``almost" equal to the square $S_{\delta^i}$.   The set $\calu^j(S_{\delta_0^i})=\Psi_{i,j}(\cup_{j=1}^N\calp^i_{\delta_j})$ is a union of actual   squares, which by Lemma~\ref{anglelemma}, lie in $Hor^j_{k+1}$.

The process of using the maps $\Psi_{i,j}$ and their inverses to push forward and pull back can be iterated any number of times starting from any square $S_{\delta^i}$ in some $Hor_k^i$.    The Lebesgue measure of the resulting pullbacks to $S_{\delta^i}$ needs to be estimated. In order to do this, we need to introduce some more notation for iterated maps.

Given $l \in \NN$, let $\boldsymbol{\iota}=(\delta^{i_0}, \delta^{i_1}, \ldots, \delta^{i_l})$,  where $i_0, i_1, \ldots, i_l, \in \{1, \ldots, N\}$.   Let  $\boldsymbol{\sigma}=\{j_0,\ldots, j_l\}$ where  $j_0, \ldots, j_l \in \{1, \ldots, N\}$ and $j_0=i_1, \ldots, j_{l-1}=i_l$.
 Denote the composition $\Psi_{i_1,i_2} \circ \Psi_{i_0,i_1} $ by $\Psi_{i_0,i_1,i_2}$ and for each $l$, inductively set
$$\Psi^l=\Psi_{i_0,i_1, \ldots i_l}.$$

 Define $\calp^{\boldsymbol{\sigma}}_{\boldsymbol{\iota}}$ as the set consisting of all points in the square $S_{\delta^{i_0}}\subset \calz_{i_0}$ whose orbit under $\Psi^l$ lies in the sequence of quadrilaterals  $\calp_{\delta_j}^{i_{j+1}}(S_{\delta^{i}_{j+1}})$, for  $j=0, \ldots, l-1$.

Denote the family of indices $({\boldsymbol{\iota}},{\boldsymbol{\sigma}})$ for which  $\calp_{\boldsymbol{\iota}}^{\boldsymbol{\sigma}}$ has a nonempty interior, and is contained in $Hor_k^{i_0}$,  by $\mathcal{I}_k^l$ .   For readability,
identify   the index $(\boldsymbol{\iota},{\boldsymbol{\sigma}})\in \mathcal{I}_k^l $ with the set  of  points in   $\calp_{\boldsymbol{\iota}}^{\boldsymbol{\sigma}}$.

  Let $N_0$ be chosen as in Lemma~\ref{largek} and assume $k>N_0$.   Set $\mathcal{I}^l=\cup_{k\geq N_0} \mathcal{I}_k^l$. By definition, for each
   $\calp^{\boldsymbol{\sigma}}_{\boldsymbol{\iota}}\in \mathcal{I}^l$,  its image under $\Psi^{l+1}$ is a union of  squares in $Hor_{k+l+1}^{j_l}$.

Set $$Y^l=\cup_{(\boldsymbol{\iota},\boldsymbol{\sigma})\in \mathcal{I}^l}\calp_{\boldsymbol{\iota}}^{\boldsymbol{\sigma}}.$$ Note that, by definition, $Y^{l+1}\subset Y^l$.  
Let \begin{equation}
\label{yinf}
Y^\infty= \cap_{l=0}^\infty Y^l.
\end{equation}

\begin{lemma}\label{meas Y}
There exists a constant $C>0$ such that if $K\in\mathcal{I}^l_k$, then $$\frac{m(K \setminus Y^{l+1})}{m(K)}\leq \frac{C}{\alpha_{|k|+l+1}}.$$
\end{lemma}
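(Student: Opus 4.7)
The plan is to partition $K$ into the disjoint quadrilaterals $\calp_{\boldsymbol{\iota}}^{\boldsymbol{\sigma}}$ indexed by $(\boldsymbol{\iota},\boldsymbol{\sigma})\in\mathcal{I}_k^l$, estimate the relative Lebesgue measure of the non--extendable portion on each piece, and sum. By construction, the iterated map $\Psi^l$ restricted to each $\calp_{\boldsymbol{\iota}}^{\boldsymbol{\sigma}}$ is a univalent diffeomorphism onto the target square $S_{\delta^{i_l}}$, and $\Psi^l(z)$ sits inside a horizontal strip of level $|k|+l$. By definition a point $z\in\calp_{\boldsymbol{\iota}}^{\boldsymbol{\sigma}}$ fails to belong to $Y^{l+1}$ exactly when the next image lands outside every $\calu^j(S_{\delta^{i_l}})$, equivalently when $\Psi^l(z)\in S_{\delta^{i_l}}\setminus\bigcup_{j=1}^N \calp^{j}_{\delta^{i_l}}$. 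The image--side one--step estimate~(\ref{approxsq}), applied inside $S_{\delta^{i_l}}$, bounds the relative measure of this exceptional set by $(4\sqrt{2}\pi^2+6)/\alpha_{|k|+l+1}$.

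To transfer this estimate back to $\calp_{\boldsymbol{\iota}}^{\boldsymbol{\sigma}}$, I will apply bounded distortion for the inverse branch of $\Psi^l$ on $S_{\delta^{i_l}}$. The argument in the proof of Proposition~\ref{expanding} exhibits, at the $j$--th step, a univalent inverse branch of $\Psi_{i_{j-1},i_j}$ on the disk of radius $\alpha_{|k|+j}/2$ about any target point in $Hor^{i_j}_{|k|+j}$. Since every intermediate square has side $\pi$, at step $j$ the relevant ``Koebe space'' is $\eta_j=O(1/\alpha_{|k|+j})$, and the second Koebe theorem of §\ref{sec2.1} provides a one--step measure--ratio distortion of at most $T(\eta_j)=1+O(1/\alpha_{|k|+j})$. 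Chaining these univalent branches along the orbit gives a total distortion bounded by
\[
T_\infty \;:=\; \prod_{j\geq 0}\Bigl(1+\frac{C'}{\alpha_{|k|+j}}\Bigr),
\]
and this infinite product converges to a finite constant independent of $l$ and $k$ because the recursion $\alpha_{m+1}=e^{N\alpha_m}$ makes $\alpha_m$ doubly exponential, so $\sum_{j\geq 0} 1/\alpha_{|k|+j}$ is dominated by a universal bound once $N_0$ is taken large enough.

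Combining the two ingredients, on each quadrilateral one obtains
\[
\frac{m(\calp_{\boldsymbol{\iota}}^{\boldsymbol{\sigma}}\setminus Y^{l+1})}{m(\calp_{\boldsymbol{\iota}}^{\boldsymbol{\sigma}})} \;\leq\; T_\infty\cdot\frac{4\sqrt{2}\pi^2+6}{\alpha_{|k|+l+1}},
\]
and because the pieces $\calp_{\boldsymbol{\iota}}^{\boldsymbol{\sigma}}$ are disjoint and exhaust $K$, summing preserves the inequality and yields the lemma with $C:=T_\infty(4\sqrt{2}\pi^2+6)$. The main technical obstacle is the uniform bound $T_\infty$ for the distortion of the composition $\Psi^l$: one has to carefully organize the Koebe chaining so that at each step the square under consideration genuinely lies inside a larger disk on which the next inverse branch is univalent, rather than merely inside a single reference $\alpha/2$--disk. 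The doubly exponential growth of $\alpha_m$ is precisely what rescues the argument, making the cumulative Koebe error summable and $T_\infty$ independent of $l$ and $k$.
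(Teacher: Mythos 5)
Your proof follows the same overall strategy as the paper's: apply the image-side one-step estimate~(\ref{approxsq}) in a horizontal strip of the appropriate level, then transport the relative measure bound back to $K$ by bounded distortion of the inverse branch of the iterated auxiliary map, and finally sum over the disjoint pieces $\calp^{\boldsymbol{\sigma}}_{\boldsymbol{\iota}}$. Where you diverge is in the justification of the uniform distortion constant. The paper applies the Koebe distortion theorem in one shot to the full inverse branch $\Xi$ of $\Psi^{-(l+1)}$ on the union of squares $W=\Psi^{l+1}(K)$, asserting a constant $C_0$ ``independent of $\Xi$'' and bounding the measure ratio by $C_0^2$; it justifies this only by observing that the backward orbit stays far from the post-singular set, without explaining why the distortion is uniform in $l$. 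You instead chain Koebe one step at a time: at step $j$ the set being pulled back sits in a square of side $\pi$ in $Hor_{|k|+j}$ while the inverse branch (as in Proposition~\ref{expanding}) is univalent on a disk of radius $\alpha_{|k|+j}/2$, so the one-step measure-ratio distortion is $1+O(1/\alpha_{|k|+j})$; the doubly exponential recursion $\alpha_{m+1}=e^{N\alpha_m}$ makes $\sum_j 1/\alpha_{|k|+j}$ converge, hence the infinite product $T_\infty$ is a universal constant. Your chaining argument is essentially a more explicit and self-contained derivation of exactly the uniformity that the paper's one-shot $C_0$ leaves implicit, so it is a welcome refinement rather than a different proof. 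One small bookkeeping remark: you place $\Psi^l(z)$ at level $|k|+l$ but then quote the estimate with denominator $\alpha_{|k|+l+1}$; the paper obtains the $+1$ by looking at $\Psi^{l+1}(K)$ (a union of squares in $Hor^{j_l}_{|k|+l+1}$) rather than $\Psi^{l}(K)$. This off-by-one is harmless for the intended application in Lemma~\ref{meas square}, but you should align your level count with whichever convention you adopt for~(\ref{approxsq}).
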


\begin{proof}
By definition,  if  $K$ is the set defined by $\calp^{\boldsymbol{\sigma}}_{\boldsymbol{\iota}}\in \mathcal{I}^l_k$, then $\Psi^{l+1}(K)  $  is a union of squares $\cup_{\delta^j\in \Delta}S_{\delta^j}$ in $Hor_{k+1+l}^{j_l}$ for some family $\Delta$.   Denote this union by $W=\Psi^{l+1}(K)$. Recall that each square $S_{\delta^j}$ contains the sets    $\mathcal{P}_{\delta^j}^i, i=1, \ldots, N$, each of which is mapped to a square.    Therefore, 
 $$\Psi^{l+1}(W\cap Y^{l+1})=\cup_{j_l\in \Delta} \cup_{i=1}^N \calp_{S_{\delta^{j_l}}}^i.$$ 
 The union of the orbits of the asymptotic values $\la_i$ is a finite set.  Since the points   $\calz_{j_1}^{-1}(Hor_{k+1+l}^{j_l})$ lie in the asymptotic tract of $\la_i$, we can choose $k$ so that they are far from its orbit.   In fact, we can choose it so that there is a neighborhood of $\calz_{j_1}^{-1}(Hor_{k+1+l}^{j_l})$ of radius $2\pi$ disjoint from the orbit of asymptotic values.  It follows that 
  the branches of  $\Psi^{-(l+1)}$ are well defined on this  neighborhood.
 
   Denote the branch that maps $W$ back to $K$ by $\Xi$ and  apply the Koebe distortion theorem.  It  says that  for $\zeta, \xi\in W$,  there exists a constant $C_{0}$ independent of $\Xi $, such that
$$\frac{|\Xi'(\zeta)|}{|\Xi'(\xi)|}\leq C_{0}.$$
Then $$\frac{m(W\setminus Y^{l+1})}{m(W)}=\frac{m(\Xi(\cup (S_{\delta^{j_l}}\setminus \cup_{i=1}^N \calp_{\delta_{j_l}}^i)))}{m(\Xi(\cup S_{\delta^{j_l}}))}\leq C_0^2\frac{\sum m(S_{\delta_{j_l}}\setminus \cup_{i=1}^N \calp_{\delta^{j_l}}^{i})}{\sum m((S_{\delta^{j_1}})}\leq \frac{C}{\alpha_{|k|+l+1}},$$ for some constant $C=(4\sqrt{2} \pi ^2+6) C_{0}^2>0$.  Note that the last inequality follows from  Equation(\ref{approxsq}). 
\end{proof}

\begin{lemma}
\label{meas square}

If  $N_0>0$ is the integer defined in Lemma~\ref{largek}, then for any square  $S_{\delta^i} \in \cup_{k\geq N_0}\mathcal{G}^i_{k}$, $m(S_{\delta^i}  \cap Y^\infty)>0$.
\end{lemma}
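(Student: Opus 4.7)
The plan is to show that $m(S\setminus Y^\infty)<m(S)$ by summing a rapidly convergent series of measure losses, one for each generation $l$ of the construction, and to exploit the doubly exponential growth of $\alpha_k$ to conclude that almost all of $S$ survives when $k\geq N_0$ with $N_0$ chosen sufficiently large.

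First I would observe the nesting $Y^{l+1}\subseteq Y^l$: truncating the index data of any $\calp^{\boldsymbol{\sigma}'}_{\boldsymbol{\iota}'}\in\mathcal{I}^{l+1}$ to length $l+1$ yields an element of $\mathcal{I}^l$ containing the same points. I would also note that $S\subseteq Y^0$, since at $l=0$ the orbit conditions are vacuous and $\calp^{\boldsymbol{\sigma}}_{\boldsymbol{\iota}}=S_{\delta^{i_0}}$ for each index, so $S$ itself appears among the components of $Y^0$. This gives the disjoint telescoping decomposition
\[
S\setminus Y^\infty=\bigsqcup_{l\geq 0}\bigl((S\cap Y^l)\setminus Y^{l+1}\bigr).
\]

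Next, for each $l\geq 0$, the set $S\cap Y^l$ decomposes as the disjoint union of those $\calp^{\boldsymbol{\sigma}}_{\boldsymbol{\iota}}\in\mathcal{I}^l_k$ whose initial index is the $\delta^i$ of $S$, where $k$ is the fixed level of $S$ (the level is not shifted by truncation, since every $\calp^{\boldsymbol{\sigma}}_{\boldsymbol{\iota}}$ is required to sit in $Hor^{i_0}_k$). Applying Lemma~\ref{meas Y} to each such component individually gives
\[
m\bigl(\calp^{\boldsymbol{\sigma}}_{\boldsymbol{\iota}}\setminus Y^{l+1}\bigr)\leq\frac{C}{\alpha_{k+l+1}}\,m(\calp^{\boldsymbol{\sigma}}_{\boldsymbol{\iota}}),
\]
and summing over the components that make up $S\cap Y^l$ yields $m((S\cap Y^l)\setminus Y^{l+1})\leq \frac{C}{\alpha_{k+l+1}}\,m(S)$. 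Adding over $l$,
\[
m(S\setminus Y^\infty)\leq m(S)\cdot C\sum_{l=0}^\infty\frac{1}{\alpha_{k+l+1}}.
\]
The recursion $\alpha_{m+1}=e^{N\alpha_m}$ with $\alpha_m>1$ produces such rapid (doubly exponential) growth that the series is dominated by twice its first term, say by $2/\alpha_{k+1}$. Enlarging $N_0$ from Lemma~\ref{largek} if necessary so that $2C/\alpha_{N_0+1}<1$, I obtain
\[
m(S\cap Y^\infty)\geq m(S)\bigl(1-2C/\alpha_{k+1}\bigr)>0,
\]
which is the claim.

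The main obstacle is not analytic but combinatorial: verifying that the components of $S\cap Y^l$ are precisely the $\calp^{\boldsymbol{\sigma}}_{\boldsymbol{\iota}}\in\mathcal{I}^l_k$ with prescribed initial index $\delta^i$ (so that $k$ does not drift with $l$), and that Lemma~\ref{meas Y} can be invoked once per component with the correct exponent $k+l+1$. This reduces to Lemma~\ref{mapping props}, which guarantees that each application of $\Psi_{i,j}$ advances the horizontal-strip level by exactly one, so after $l$ steps the relevant strip is $Hor^{i_l}_{k+l}$ and the next-step loss is controlled by $1/\alpha_{k+l+1}$ as required.
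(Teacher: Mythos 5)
Your proof is correct and follows essentially the same route as the paper's: iterate Lemma~\ref{meas Y} once per generation, decompose $S\cap Y^l$ into its constituent $\calp^{\boldsymbol{\sigma}}_{\boldsymbol{\iota}}$'s, and use the doubly-exponential growth of $\alpha_k$ to control the total loss. The only cosmetic difference is that the paper packages the estimate as a convergent infinite product $\prod_{l}(1-C/\alpha_{|k|+l})$ whereas you sum the losses directly and enlarge $N_0$ to make the total less than $m(S)$; these are equivalent.
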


\begin{proof}
Suppose that $S=S_{\delta^i}\in Hor^i_{k}$ for some $k>N_0$.  For each $l \geq 0$, set $S^l=S\cap Y^l$.     According to the previous lemma,
$$m(S^l\setminus S^{l+1})\leq \frac{m(S^l)C}{\alpha_{|k|+l}}.$$
 Equivalently,  $$m(S^{l+1})\geq m(S^l)(1-\frac{C}{\alpha_{|k|+l}}).$$
Since $$\sum_{l=0}^\infty\frac{1}{\alpha_{|k|+l}}\leq \sum_{l=0}^\infty \frac{1}{\alpha_{|k|}^l}$$  and the right side is convergent, it follows that
$$
0<C_1=\prod_{l=0}^\infty\Big(1-\frac{C}{\alpha_{|k|+l}}\Big) <\infty.
$$
Therefore, for $S^{\infty} =S^0\cap Y^{\infty}$,
$$
m(S^\infty)\geq m(S)\prod_{l=0}^\infty\Big(1-\frac{C}{\alpha_{|k|+l}}\Big)\geq C_{1} m(S)>0.
$$

\end{proof}

Now we are ready to complete the proof of  Main Theorem 1.

\begin{proof}[Proof of Main Theorem 1.]
Let $k\geq N_0$, where $N_0$ is the large constant defined in Lemma~\ref{mapping props}.  Let $\cali^{l}$ be defined as above.  From the definitions of $Hor_{k}^{i_0}$, and by Lemma~\ref{meas square}, we can find two squares $S_{\delta^{i_0}}\not=S_{\delta^{i_0}}'\in Hor_k^{i_0} $ such that
\begin{enumerate}
\item $m(S_{\delta^{i_0}} \cap Y^\infty)>0$ and $m(S_{\delta^{i_0}}' \cap Y^\infty)>0$;
\item  Every  pair  of points $(\calz,\calz') \in  (S_{\delta^{i_0}}, S_{\delta^{i_0}}'), $  satisfies
$|\Im \calz|  - |\Im \calz'|\geq 2\alpha_{k-1}$.
 \end{enumerate}

Let $W_1= S_{\delta^i}\cap Y^\infty$ and $W_2= S_{\delta^i}'\cap Y^\infty$. By  construction,  for any $l\geq 0$,
\begin{enumerate}
\item
 $\Psi^l (W_1)\subset Hor^{i_1}_{k+l}$, $\Psi^l (W_2) \subset  Hor^{i_2}_{k+l}$ for some $i_1, i_2\in \{1, \ldots, N\}$;
\item for all pairs $(\calz_{i_1}\in \Psi^l(W_1),  \calz_{i_2}\in \Psi^l(W_2))$, $|\Im \calz_{i_1}|  - |\Im \calz_{i_2}|\geq 2\alpha_{k+l-1}$.
\end{enumerate}
The first assertion shows that both $W_1$ and $W_2$ are wandering sets and the second shows that their forward orbits are disjoint. 
Note that $\calz_{i_0}$ is analytic, so both $\calz_{i_0}^{-1}( W_{1}), \calz_{i_0}^{-1}( W_{2})$  have positive Lebesgue measure.

 Let
 $$
 E_1=\cup_{m=0}^{\infty} f^{-m}(\cup _{n= 0}^\infty f^n(\calz_{i_0}^{-1}(W_1))) \supset \calz_{i_0}^{-1}( W_{1})
 $$
 and
 
 $$
 E_2=\cup_{m=0}^{\infty} f^{-m}(\cup _{n= 0}^\infty f^n(\calz_{i_0}^{-1}(W_2)))\supset \calz_{i_0}^{-1}(W_{2}).
 $$
 Both have positive Lebesgue measure in the Julia set of $f$. They are mutually disjoint and completely invariant.
 This implies that $f$ acts non-ergodically on its Julia set  completing  the proof of the Main Theorem.

\end{proof}

\section{Main Theorems 2 and 3}~\label{ols}

 Let $P_f=\cup_{i=1}^N\{\la_i, f(\lambda_i), \ldots, f^{p_i-1}(\la_i), \infty\}$ be the post-singular set.  As we proved above,  $f$ is not ergodic.  From the extended dichotomy of Bock \cite{Boc}, we know that for almost every point $z\in \CC$, $\lim_{n\to \infty} d(f^n(z), P_f)\to 0$.  This implies that the $\omega$-limit set $\omega(z) \subset P_{f}$ and in terms of the auxiliary variables, it says that for each such $z$, 
    there exists a fixed $i \in \{1, \ldots, N\}$,  a sequence $n_k\to \infty $, and an $n_{k_0}$ such that for all $n_k \geq n_{k_0}$,    $f^{n_k}(z)  $  is in the asymptotic tract $T_i$ and $\Im \calz_{i}(f^{n_k}(z)) \to \infty$.

The set   $Y^\infty$ was defined in (\ref{yinf}) as a  subset of $\cup_{i=1}^N \calz_i$ and we showed in Lemma~\ref{meas square} that it has positive Lebesgue measure.    In order to prove  Main Theorem 2, which is a result about points in $\CC$, we need to pull $Y^{\infty}$ back  to $\CC$.  To do this,   define
$E^{\infty}= \cup_{i=1}^N \calz_i^{-1}(Y^{\infty} \cap \calz_i)$ and set $E = \cup_{n \in \NN} f^{-n}(E^{\infty})$.

 Since  $E^\infty\subset E$ and $m(Y)>0$, pulling back, $m(E)>0$. In fact, more is true.

  \begin{lemma} \label{C-E meas 0} The set $E$ in $\CC$  has full Lebesgue measure;  that is, $m(\CC\setminus E)=0$.
\end{lemma}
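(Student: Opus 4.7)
The plan is to argue by contradiction using a Lebesgue density point argument, exploiting that $Y^{\infty}$ has a positive lower density in every square $S_{\delta^i}\in\cup_{k\geq N_0}\calg_k^i$ (Lemma~\ref{meas square}) and transferring this density back to $\CC$ via univalent branches of $\calz_i\circ f^n$. The first observation is that $E$ is backward--invariant, so $\CC\setminus E$ is forward--invariant: if $f(z)\in E$ then $f^{n+1}(z)\in E^\infty$ for some $n$, forcing $z\in E$. Suppose $m(\CC\setminus E)>0$. By Main Theorem~1 and Bock's extended dichotomy recalled above, for a.e.\ $z\in\CC\setminus E$ we have $d(f^n(z),P_f)\to 0$, and since every $\la_i$ is a prepole terminating at $\infty$, there exist a fixed index $i$ and a subsequence $n_j\to\infty$ with $f^{n_j}(z)\in T_i$ and $\Im\calz_i(f^{n_j}(z))\to\infty$. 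Pick such a $z_0\in\CC\setminus E$ that is also a Lebesgue density point of $\CC\setminus E$ and whose orbit avoids the (finite) orbits of the asymptotic values.

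Fix $\epsilon>0$, and choose $r>0$ so small that $m(D(z_0,r)\cap E)<\epsilon\cdot m(D(z_0,r))$. Because $f$ has no critical points and the orbit of $z_0$ avoids the poles $p_i$ and the singular values, every iterate $f^n$ is a local biholomorphism at $z_0$, so there is $r'>r$ on which $\calz_i\circ f^n$ is univalent. Apply the Koebe distortion theorem on $D(z_0,r')$: the image $\calz_i(f^n(D(z_0,r)))$ has bounded distortion from a round disk of radius comparable to $|(\calz_i\circ f^n)'(z_0)|\cdot r$ centered at $\calz_i(f^n(z_0))$, and the measure ratio theorem gives, for any measurable $A,B$ in this image, $m((\calz_i\circ f^n)^{-1}A)/m((\calz_i\circ f^n)^{-1}B)$ within a bounded factor of $m(A)/m(B)$.

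Choose $n=n_j$ so large that $\calz_i(f^n(z_0))$ lies in some $Hor_k^i$ with $k\gg N_0$ and, using the strong expansion of iterates at infinity (Proposition~\ref{expanding} together with $|\calz_i'(z)|\sim C|z|^{(N-2)/2}$ in $T_i$), the image $\calz_i(f^n(D(z_0,r)))$ contains at least one full square $S_{\delta^i}\in\calg_k^i$. By Lemma~\ref{meas square}, $m(S_{\delta^i}\cap Y^\infty)\geq C_1\,m(S_{\delta^i})$ with $C_1>0$ independent of $k$. Pulling back by $(\calz_i\circ f^n)^{-1}$ and invoking the Koebe measure--ratio estimate yields a universal $C_2>0$ with
\[
m\bigl((\calz_i\circ f^n)^{-1}(S_{\delta^i}\cap Y^\infty)\cap D(z_0,r)\bigr)\geq C_2\cdot m(D(z_0,r)).
\]
Since $(\calz_i\circ f^n)^{-1}(S_{\delta^i}\cap Y^\infty)\subset f^{-n}(E^\infty)\subset E$, this contradicts the density inequality as soon as $\epsilon<C_2$, completing the proof.

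The main obstacle is the coordination of parameters in the Koebe step: as $n_j$ grows, the univalence radius $r'$ of $\calz_i\circ f^n$ may shrink while the derivative $|(\calz_i\circ f^n)'(z_0)|$ grows rapidly. One must fix the Koebe parameter $\eta\in(0,1)$ first, then shrink $r$ so that $f^{n_j}$ remains univalent on $D(z_0,r/\eta)$ for infinitely many $n_j$ (using that the poles and prepoles of $f^{n_j}$ are isolated and that $z_0$ avoids their orbit), and finally use the expansion estimate of Proposition~\ref{expanding} to verify that $|(\calz_i\circ f^n)'(z_0)|\cdot r$ eventually exceeds $\pi\sqrt{2}$, guaranteeing that a full square of side $\pi$ sits inside the image.
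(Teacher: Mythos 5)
Your overall strategy — Lebesgue density point, transfer to an auxiliary $\calz_i$-plane via Koebe distortion, and then invoke the positive-density result of Lemma~\ref{meas square} — is the same as the paper's. However, the direction in which you run the Koebe argument contains a genuine gap. You fix $\epsilon$ and then a single radius $r$, and push $D(z_0,r)$ forward under $\calz_i\circ f^{n_j}$ hoping to capture a full square. But there is no radius $r>0$ for which $\calz_i\circ f^{n_j}$ is univalent on $D(z_0,r/\eta)$ for infinitely many $n_j$: as $n_j\to\infty$ the maximal disk of univalence around $z_0$ shrinks to the point $z_0$ (the orbit of $z_0$ escapes to infinity and the backward orbit of the postsingular set is dense), so the ordering ``fix $r$ then choose large $n_j$'' cannot be made to work. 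You notice this obstacle and propose to ``shrink $r$ so that $f^{n_j}$ remains univalent on $D(z_0,r/\eta)$ for infinitely many $n_j$,'' but that resolution is still of the form ``one $r$, infinitely many $n_j$'' and therefore fails for the same reason.

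The paper sidesteps the issue by reversing the direction: for each $n_k$ it works in the $\calz_i$-plane, choosing a \emph{large} disk $\widetilde D_{n_k}$ (of radius comparable to $\alpha_{l_{k}-1}$) centered at $\calz^{n_k}=\calz_i(f^{n_k}(z))$. Because of the choice of $\alpha_0$ and $c$ the orbits of the asymptotic values stay far from $\widetilde D_{n_k}$, so the inverse branch $F_{n_k}$ of $f^{-n_k}\circ\calz_i^{-1}$ is univalent there, and its image $F_{n_k}(\widetilde D_{n_k})$ is a small neighborhood of $z_0$ with circumscribed radius $R_k\to0$. The density-point comparison is then made along this shrinking sequence $D(z_0,R_k)$, with $k$ as the only moving parameter, so there is no $r$-versus-$n$ coordination problem. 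To repair your argument you would have to replace the ``fix $r$, then $n$'' scheme by exactly this $n$-indexed pullback: take the domain of Koebe distortion to be a disk in the $\calz_i$-plane, not a disk around $z_0$.
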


\begin{proof} Assume the contrary,  $m(\mathbb{C}\setminus E)>0$,  and let $z \in \CC \setminus E$ be a Lebsgue density point.  As above, there is an $ i \in \{1, \ldots, N \}$ and  a sequence $n_k$ such that $\Im \calz_{i}(f^{n_k}(z)) \to \infty$ as $n_k\to \infty$.  Let $k_0$ be the smallest integer such that $f^{k_0}(z)$ lies in the asymptotic tract $T_i$ and $\Im \calz_i(f^{n_{k_0}}(z)) > \alpha_0$. 

 Let $l_k < \infty$ be a sequence such that
$$\alpha_{l_k}-2\alpha_{l_{k}-1}\leq \Im \calz_{i}(f^{n_k}(z)) \leq \alpha_{l_{k}+1}-2\alpha_{l_k}.$$
Note that these inequalities define a set of strips $\widetilde{Hor^{i}_{l_{k}}}$ that  is different from, but intersects the strips $Hor^{i}_{n_k}$.

 Let $\calz^{n_k}=\calz_{i}(f^{n_k}(z))$, and let $F_{n_k}$ be the  branch of $f^{-n_k}\circ \calz_{i}^{-1}$ that maps  $\calz^{n_k}$ to $z$. 
Let $Q^{n_k}$  be the square centered at $\calz^{n_k}$  with side length $8\alpha_{l_{k}-1}$.\footnote{This is not one of the squares in $\calg^{i}_{n_k}$.}
Then $$m(Q^{n_k} \cap (\cup_{k}Hor^{i}_{l_k}))\geq \frac{1}{2}m(Q^{n_k}).$$
Note that $Q^{n_k}$  intersects $Hor^{i}_{l_k}$ and/or $Hor^{i}_{l_k-1}$.
The minimum on the left  of the above inequality occurs, for example,  when $\calz^{n_k}$ falls on the mid-line of the gap between $Hor^{i}_{l_k}$ and $Hor^{i}_{l_k-1}$ because the height of the gap is $4\alpha_{l_{k}-1}$.

 Let $$\widetilde{Q}^{n_k}=\{z\in Q^{n_k}:\ z\in S\subset Q^{n_k} \cap (Hor^{i}_{l_k}\cup Hor^{i}_{l_k-1}) \text{ for  squares } S \in \mathcal{G}^{i}\}.$$
By the above,  $$m(\widetilde{Q}^{n_k})\geq \frac{1}{4}m(Q^{n_k}). $$
 Since  Lemma~\ref{meas square} implies that  for any square $S\subset \cup_{k}^{\infty} Hor^{i}_{l_k}$,  there is a constant $C>0$ such that $m(S\cap Y^\infty)\geq  C m(S)$, it follows that
  $$m(\widetilde{Q}^{n_k}\cap Y)\geq m(\widetilde{Q}^{n_k}\cap Y^\infty)\geq \frac{C}{4}m(Q^{n_k}). $$

Let  $\widetilde{D}_{n_k}=D(\calz^{n_k}, 8\sqrt{2}\alpha_{l_{k-1}})\supset \widetilde{Q}^{n_k}$.  Recall that  $z=F_{n_{k}}(\calz^{n_k})=f^{-n_k} \circ (\calz_{i} )^{-1}(\calz^{n_k})$ and set
$$A=|( \mathcal{Z}_i\circ  f^{n_{k_0}})'(z)| = |(F_{n_{k_0}}^{-1})(\calz^{n_k})'|>0.$$
 
  Let $U_k=F_{n_k}(\widetilde{Q}^{n_k})$ and  denote the respective inscribed and circumscribed circles in $U_k$ by  $D(z,r_k)$ and $D(z,R_k)$.  Since $F_{n_k}$ is univalent on $\widetilde{D}_{n_k}$, it has uniform distortion on $\widetilde{Q}^{n_{k}}$ and by Koebe's theorem there is  a  constant $B>0$ such that
$$
\frac{|F_{n_k}'(\xi)|}{|F_{n_k}'(\eta)|} \leq B, \quad \forall\; \xi, \eta\in D_{n_k}.
$$
Pulling back to the $z$-plane,
 $$
 \frac{R_k}{r_k}\leq B.
 $$

  If $\ell=n_{k}-n_{k_0}$, then  $\Psi^{\ell}=\calz_i \circ f^{\ell} \circ \calz_i^{-1}$ and Proposition~\ref{expanding} implies
  $$|(\Psi^{\ell})'(z)|\geq \frac{ \alpha_{l_k}}{4\pi}.$$  
  Since $$F_{n_k}=(\Psi^{\ell} \circ \mathcal{Z}^{i} \circ f^{n_{k_0}})^{-1},$$  its derivative satisfies 
   $$|F'_{n_k}(z)|\leq \frac{4\pi}{A \alpha_{l_k}}.  $$  
   Because the diameter of $\widetilde{Q}^{n_k}$ is less than $16\sqrt{2}\alpha_{k-1}$,  the diameter of $F_{n_k}(\widetilde{Q}^{n_k})$ tends to $0$, and  therefore $R_{k}\to 0$.

 Furthermore,
 $$
   \frac{m(E \cap D(z, R_k))}{m(D(z,R_k))} \geq \frac{m(E\cap D(z, R_k))}{B^2m(D(z,r_k))} \geq \frac{m(E\cap U_{k}))}{B^2m(U_k))}
  $$
  $$
  \geq \frac{1}{B^2}\frac{m(Y\cap \widetilde{Q}^{n_k})}{B^2m(\widetilde{Q}^{n_k})}\geq \frac{1}{B^4}\frac{m(Y\cap \widetilde{Q}^{n_k})}{m(Q^{n_k})}\geq 
  \frac{C}{4B^{4}}.
   $$
Since this inequality says that
$$
\lim_{k\to \infty} \frac{ m(E\cap D(z, R_k))}{m(D(z,R_k)) }\geq \frac{C}{4B^{4}}>0,
$$
it implies  that the  density of $z$ in  $E$ is positive,   contradicting our assumption that $z$ is a density point of  the complementary set $\CC \setminus E$. 
  Therefore $E$ has full Lebesgue measure.
\end{proof}

With this lemma, we can now prove 

\begin{starthm2}
\label{omegalimset}
For almost every point $z\in \mathbb{C}$, $$\omega(z)=P_f=\cup_{i=1}^N\{\lambda_i, f(\lambda_i),\ldots, f^{p_i-1}(\lambda_i), \infty\}.$$
\end{starthm2}

\begin{proof} Since $f$ is not ergodic,  for almost every point $z\in {\mathbb C}$,
$$
\lim_{n\to \infty} d(f^n(z), P_f)=0.
$$
This implies that $\omega(z)\subseteq P_f$ for almost every point $z\in {\mathbb C}$.  To prove the theorem we need to show that
$
\{z\in {\mathbb C}, \;\;\omega(z)\subsetneq P_f\}
$
has zero Lebesgue measure. 
By Lemma~\ref{C-E meas 0}, we only need to show that
$$
\{z\in E, \;\;\omega(z)\subsetneq P_f\}
$$
has zero Lebesgue measure.

In each plane $\mathcal{Z}_i$,   let $S=S_{\delta_i}$ be a square in $\mathcal{G}_k^i$ in the strip $Hor^i_k$.  
Given $l \in \NN$, and a  $q \in \{1,\ldots, N\}$,   let
 $$K_l^q=\{ \calp^{\boldsymbol{\sigma}}(S)\ :\ \boldsymbol{\sigma}=\{\delta^{i_0},\ldots, \delta^{i_l}\}, i_0\neq m, \ldots, i_l\neq m\}.$$ 
That is,  $K_l^m$ consists of all regions $L$ such that {\em none} of  its successive images under the composition map $\Psi^l$  belong to the $\calz_m$ plane, and whose final image is a square $S'$ in a horizontal strip $Hor^j_{k+l}$ of some $\mathcal{Z}_j-$plane. 

 
  Recall that for any $i$, each square $S' =Rect^i_{k+l,n}\subset Hor^i_{k+l}$ is evenly divided into $N$ rectangles $Rect^i_{j,k+l,n}$; this implies that 
  $$m(S' \cap Rect^i_{j,k+l,n})\leq \frac{1}{N}m(S'),$$ which in turn implies that $$m(S' \cap \cup_{j\neq m}Rect^i_{j,k+l,n})\leq \frac{N-1}{N}m(S').$$


Fix $q'$ and  fix  $L_{q'}=\calp^{\boldsymbol{\sigma}}_{\boldsymbol{\iota}}(S)\in K^l_{q'}$.   The Koebe distortion theorem  for the map $\Psi^l$ implies that there is a distortion factor $D$ such that for each $q$
$$
\frac{m(L_{q'}\cap (\cup_{L\in K_{l+1}^q}L))}{m(L_{q'})}\leq D\frac{m(S'\cap  \cup_{j\neq q}Rect^j)}{m(S')}\leq D\frac{N-1}{N}.
$$
  Because $\Psi^l$ is univalent for all $l$,   its image is outside a large disk in  some $\calz^i$ plane.  Thus, the distortion factor $D$ on each square $S'$ of side $\pi$  is close to $1$.  Thus   we can  assume that $ND/(N-1)<1$;  for example 
  $$D=\frac{N+a}{N-a}, \text{for some } 0<a<\frac{N}{2N-1}$$
Therefore, for each $q$
$$
\frac{m((\cap_{l=0}^{\infty} \cup_{{\mathcal P} \in K_{l}^{q}} {\mathcal P} )\cap S)}{m(S)} =0
$$
and
$$
m((\cap_{n=0}^{\infty} \cup_{{\mathcal P} \in K_{l}^{q}} {\mathcal P} )\cap S) =0.
$$

Finally since $S$ was arbitrary,   set 
$$\mathcal{W}_i=\cup_{q=1}^N\cup_{k}\cup_{S\in Hor_k^i}(\cap_{n=0}^{\infty} \cup_{{\mathcal P} \in K_{l}^{q}} {\mathcal P} ).$$  Thus $m(\mathcal{W}_i)=0$, and if $W=\cup_{i=1}^N Z_i^{-1}(\mathcal{W}_i)$,  then $m(W)=0$; this  implies  $\cup_{n=1}^\infty f^{-n}(W)$ also has zero Lebesgue measure.

To complete the proof, note that $\{z\in E: \omega_f(z)\subsetneq P_f\}\subset \cup_{n=1}^\infty f^{-n}(W)$,  so that it has zero Lebesgue measure. 
\end{proof}

\begin{starthm3}
\label{noabscontmeasure}
 There is no $f$-invariant finite measure   absolutely continuous with respect to Lebesgue measure.
\end{starthm3}

\begin{proof}
Suppose there is an $f-$invariant absolutely continuous measure $\rho$.

Let $\mathcal{W}^i(k)=Y^\infty\cap \{z\in \calz_i, |\Im z|>\alpha_k\}$ and $W_0^i(k)=\calz_i^{-1}(\mathcal{W}^i(k))$.   Then  $$f^{p_i+1}(W_0^i(k))\subset \cup_{i=1}^N W_0^i(k+1) \subsetneq \cup_{i=1}^N W_0^i(k).$$

For each $i=1, \ldots, N$  and $j=1,\ldots, p_i,$ set $W_j^i(k)=f^j(W_0^i(k))$.   For each $k$,  $W_j^i(k)$ is a  bounded set containing $f^j(\la_i)$;  let $\beta_k^j$ be the radius of the maximal disk in  $W_j^i(k)$ centered at  $f^j(\la_i).$ By Lemma~\ref{mapping props},  these disks form a nested sequence whose radii go to zero, so that  as $k \to \infty$, the sequences   $\beta_k^j \to 0$.   
 

Let $\mathbf{W}(k)=\cup_{i=1}^N \cup_{j=0}^{p_i} W_j^i(k)$ and let $p=\max\{ p_1, \ldots, p_N\}$; then, for all pairs $q, n\in \mathbb{N}$,  
 $$f^{(p+1)n+q}(\mathbf{W}(k))\subset \mathbf{W}(k+n). $$

Since $E^\infty\subset \cup W_0^i(1)\subset \mathbf{W}(1)$,  it follows from  Lemma \ref{C-E meas 0}  that
 $$m(\CC \setminus \cup_{n\in \mathbb{N}} f^{-n}(\mathbf{W}(1)))=0.$$ 
 By the  absolutely continuity of  $\rho$, 
 $$\rho(\CC\setminus \cup_{n\in \mathbb{N}} f^{-n}(\mathbf{W}(1)))=0.$$ 
 Moreover, since  $\rho$ is also $f-$invariant,  there is an $r>0$ such that  $\rho(\mathbf{W}(1))=r$.  Furthermore,  since $f^{k(p+1)}(\mathbf{W}(1))\subset \mathbf{W}(k)$ for each $k$, the invariance implies that  $\rho(\mathbf{W}(k)) \geq  \rho(\mathbf{W}(1)) = r$.  

Claim: There exists  an $r'>0$ such that for each $k$, $$\rho(\mathbf{W}(k)\setminus \cup_{i=1}^N W_0^i(k))\geq r'.$$

If not, for all $r'\leq r/2$, there exists a $k$ such that $\rho(\mathbf{W}(k)\setminus \cup_{i=1}^N  W_0^i(k))<r'$.  However, assuming this implies
 $$\rho(\mathbf{W}(k)\setminus \cup_{i=1}^N  W_0^i(k))\geq \rho (\cup_{i=1}^N  W_1^i(k))=\rho(f(\cup_{i=1}^N  W_0^i(k)))\geq \rho(\cup_{i=1}^N W_0^i(k))\geq r-r'  \geq r',$$  which is a contradiction.  Thus the claim holds and so for all $k$,

$$\rho(\cup_i^N\cup_{j=1}^{p_i} W_i^j(k)) \geq r'.$$

Furthermore since  $$\cap_{k=1}^\infty (\cup_{i=1}^N\cup_{j=1}^{p_i} W_i^j(k)))=\cup_{i=1}^N \{\lambda_i, \ldots, f^{p_i-1}(\lambda_i)\},$$ it follows that
$$\rho(\cup_{i=1}^N \{\lambda_i, \ldots, f^{p_i-1}(\lambda_i)\}) \geq r'$$ which contradicts  the absolute continuity of $\rho$.  

\end{proof}

%

%


%
\subsection*{Acknowledgements}
The authors would like to express their sincere gratitude to Weiyuan Qiu for raising the ergodicity question during a talk. The authors are also grateful to the anonymous referees for their careful reading and valuable suggestions, which have significantly improved the manuscript.
\subsection* {Funding} The first author was supported by the AMS–Simons Research Enhancement Grant (No. 01009877) and PSC-CUNY awards. The second author was supported by gifts from the Simons Foundation (Nos. 523341 and 942077) and PSC-CUNY awards. This material is also based upon work supported by the National Science Foundation under Grant No. 1440140, while the third author was in residence at the Mathematical Sciences Research Institute in Berkeley, California, during the Spring 2022 semester.

\end{document}